\DeclareMathAlphabet{\mathcal}{OMS}{cmsy}{m}{n}
\definecolor{ao(english)}{rgb}{0.0, 0.5, 0.0}
\pgfplotsset{compat=newest}
\crefname{figure}{Fig.}{Fig.}
\newtheorem{thm}{Theorem}
\crefname{thm}{Theorem}{Theorems}
\newtheorem{prop}{Proposition}
\crefname{prop}{Proposition}{Propositions}
\newtheorem{lem}{Lemma}
\crefname{lem}{Lemma}{Lemmas}
\newtheorem{cor}{Corollary}
\crefname{cor}{Corollary}{Corollaries}
\theoremstyle{remark}
\crefname{rem}{Remark}{Remarks}
\theoremstyle{definition}
\crefname{example}{Example}{Examples}
\crefname{ass}{Assumption}{Assumption}
\let\mathbb=\mathds
\theoremstyle{definition}
\crefname{defn}{Definition}{Definitions}
\crefname{prob}{Problem}{Problems}
\crefname{algorithm}{Algorithm}{Algorithms}
\newcommand{\Rn}{\mathbb{R}^{n \times n}_{\geq 0}}
\newcommand{\Rmn}{\mathbb{R}^{n \times m}}
\newcommand{\Rnv}{\mathbb{R}^{n}_{\geq 0}}
\newcommand{\Rnn}{\mathbb{R}^{n \times n}}
\newcommand{\Rtva}{\mathbb{R}^{2 \times 2}_{\geq 0}}
\newcommand{\Rtre}{\mathbb{R}^{3 \times 3}_{\geq 0}}
\newcommand{\Rtrev}{\mathbb{R}^{3}_{\geq 0}}
\newcommand{\Rfyra}{\mathbb{R}^{4 \times 4}_{\geq 0}}
\newcommand{\cone}{\textnormal{cone}}
\newcommand{\conv}{\textnormal{conv}}
\newcommand{\rk}{\textnormal{rank}}
\newcommand{\diag}{\textnormal{diag}}
\newcommand{\blkdiag}{\textnormal{blkdiag}}
\newcommand{\inter}{\textnormal{int}}
\newcommand{\transp}{\mathsf{T}}
\colorlet{FigColor1}{blue}
\colorlet{FigColor2}{red}
\colorlet{FigColor3}{ao(english)}
\colorlet{FigColor4}{orange}
\pgfplotsset{every axis plot/.append style={line width=1.5pt}}
\title{On the Similarity to Nonnegative and Metzler Hessenberg Forms%
	\thanks{Special Matrices, accepted for publication.
		
		This work was supported by the ELLIIT Excellence Center and by the Swedish Research Council through the LCCC Linnaeus Center. It was also supported by the European Research Council under the ERC Advanced Grant Agreements Switchlet n.670645 and ScalableControl n.834142 as well as by DGAPA-UNAM under the grant PAPIIT RA105518 and by SSF under the grant RIT15-0091 SoPhy."}}
\author{Christian Grussler \thanks{Department of Electrical Engineering and Computer Sciences, UC Berkeley, Berkeley, CA 
		({christian.grussler@berkeley.edu})}
	\and Anders Rantzer \thanks{Department of Automatic Control, Lund University, Lund, Sweden 
		({rantzer@control.lth.se}.)}}
\begin{document}

\maketitle

\begin{abstract}
We address the issue of establishing standard forms for nonnegative and Metzler matrices by considering their similarity to nonnegative and Metzler Hessenberg matrices. It is shown that for dimensions $n \geq 3$, there always exists a subset of nonnegative matrices that are not similar to a nonnegative Hessenberg form, which in case of $n=3$ also provides a complete characterization of all such matrices. For Metzler matrices, we further establish that they are similar to Metzler Hessenberg matrices if $n \leq 4$. In particular, this provides the first standard form for controllable third order continuous-time positive systems via a positive controller-Hessenberg form. Finally, we present an example which illustrates why this result is not easily transferred to discrete-time positive systems. While many of our supplementary results are proven in general, it remains an open question if Metzler matrices of dimensions $n \geq 5$ remain similar to Metzler Hessenberg matrices. 
\end{abstract}

\section*{Introduction}
Similarity transformation of matrices into structured standard forms is among the most fundamental tools in linear algebra. One of the most common real-valued forms is the \emph{upper Hessenberg form}, i.e., matrices which are zero except for possible non-zero elements above and on the first lower diagonal \cite{datta2004numerical}. Non-uniqueness of this form, however, raises the question whether it is possible to incorporate and preserve additional properties. In this work, we consider the following problem: 
\begin{center}
	\emph{When is a nonnegative matrix similar to a nonnegative Hessenberg matrix?}
\end{center}
We also ask this question when nonnegativity is replaced by Metzler, i.e., matrices that are nonnegative apart from their main diagonals. Besides a necessary condition for trace-zero nonnegative matrices in $\mathds{R}^{5 \times 5}_{\geq 0}$ \cite{reams1996inequality}, little appears to be known about this problem. 

Similar to standard numerical approaches that transform a nonnegative matrix $A \in \mathds{R}^{n \times n}_{\geq 0}$ into upper Hessenberg form \cite{datta2004numerical}, in this work, we seek a nonnegative similarity transformation $T_{2} \in  \mathds{R}^{n-1 \times n-1}_{\geq 0}$ such that the block-diagonal matrix $T = \begin{pmatrix}
	1 & 0 \\
	0 & T_2
\end{pmatrix}$ yields a nonnegative/Metzler
\begin{equation*}
	T^{-1}AT = T^{-1}\begin{pmatrix}
		a_{11} & c_{2} \\
		b_{2} & A_{2}
	\end{pmatrix}T = \begin{pmatrix}
		a_{11} & c_2 T_2\\
		T_2^{-1}b & T_2^{-1}A_2T_2
	\end{pmatrix}
\end{equation*}
with $T_2^{-1}b_2 = e_1$ being the first canonical unit vector in $\mathds{R}^{n-1}$ and $T_2^{-1}A_2T_2$ an upper Hessenberg matrix. As $T_2$ can be found recursively by replacing $A$ with $A_2$, our investigations will start with the case of $n=2$ to characterize the case $n=3$, which then leads to $n=4$. Our main complete characterizations are:
\begin{enumerate}[I.]
	\item $A \in \Rtre$ is similar to a nonnegative Hessenberg matrix if and only if $A$ does not have a negative eigenvalue of geometric multiplicity $2$.
	\item Every Metzler $A \in \mathds{R}^{n \times n}$ with $n \leq 4$ is similar to a Metzler Hessenberg matrix. \label{item:Metlzer_4D}
\end{enumerate}
Further, we note that all $A \in \mathds{R}^{n \times n}_{\geq 0}$ with a negative eigenvalue of geometric multiplicity $n-1$ cannot be similar to a nonnegative Hessenberg matrix.  

In the language of system and control theory, our approach is equivalent to finding a state-space transformation $T_{2} \in  \mathds{R}^{n-1 \times n-1}$ to a so-called \emph{discrete-time (DT) positive system} \cite{farina2011positive} $(A_2,b_2,c_2)$: a state-space system
\begin{equation*}
	\begin{aligned}
		x(t+1) &= A_2 x(t) + b_2 u(t),\\
		y(t) &= c_2x(t),
	\end{aligned}
\end{equation*}
with nonnegative system matrices  $A_2,b_2,c_2$; such that $(T_2^{-1}AT_2,T_2^{-1}b,cT_2)$ is a positive system in \emph{controller-Hessenberg form} \cite{datta2004numerical}, i.e., $T_2^{-1}AT$ is a nonnegative upper Hessenberg matrix, $T_2^{-1}b = e_1$ and $c_2T_2$ is nonnegative. In other words, our characterization also attempts to answer the question when a positive systems can be realized in positive controller standard form. So far, this question has only been implicitly considered for minimal second-order systems \cite{farina2011positive}. For \emph{continuous-time (CT) positive systems}, which are defined by replacing $A_2$ with a Metzler matrix and $x(t+1)$ with $\dot{x}(t)$, our results rely on a new characterization of all CT positive systems with state-space dimension $3$ that can be transformed into a positive system with controller-Hessenberg form. As this also remains true for DT positive systems, where $A_2$ is sufficiently diagonally dominant, we can further conclude that \Cref{item:Metlzer_4D} also applies for sufficiently diagonally dominant $A \in \mathds{R}^{4 \times 4}$. Unfortunately, as demonstrated by an illustrative example, a general characterization in DT is less trivial. 

While many of our techniques and tools can be extend to structured cases of larger dimensions, it remains an open question whether a Metzler Hessenberg form also exists for $n \geq 5$. The main reason for this analysis limitation is our usage of the fact that polyhedral cones, which are embedded in a 2-dimensional subspace, can be generated by at most two extreme rays. As this is not necessarily true for cones that are embedded in a higher dimensional subspace (see nonnegative rank \cite{cohen1993nonnegative}), our analysis does not simply extend. 

Finally, note that our results can also be seen towards standard forms for the nonnegative/Metzler inverse eigenvalue problem (see, e.g., \cite{loewy2017necessary,cronin2018diagonalizable,egleston2004nonnegative}), a problem which shares the issue that complete solutions for $n> 4$ are missing. 
\section{Preliminaries}
For real valued matrices $X = (x_{ij}) \in \Rmn$, including
vectors $x = (x_i) \in \mathbb{R}^n$, we say that $X$ is
\emph{nonnegative}, $X\ge 0$ or $X \in \Rmn_{\geq 0}$, if
all elements $x_{ij} \in \mathbb{R}_{\geq 0}$; corresponding notation are used for positive matrices.  Further, $X \in\Rnn$ is called \emph{Metzler}, if all its off-diagonal entries are nonnegative. If $X\in \Rnn$, then $\sigma(X) =
\{\lambda_1(X),\dots,\lambda_n(X)\}$ denotes its
\emph{spectrum}, where the eigenvalues are ordered by
descending absolute value, i.e., $\lambda_1(X)$ is the
eigenvalue with the largest magnitude, counting multiplicity.
In case that the magnitude of two eigenvalues coincides, we
sub-sort them by decreasing real part. A matrix $X$ is called \emph{reducible}, if there exists a permutation matrix $\pi \in \mathds{R}^{n \times n}$ such that 
$$ \pi^{T} X \pi = \begin{pmatrix}
	X_1 & \ast \\
	0 & X_2
\end{pmatrix},$$
where $X_1$ and $X_2$ are square matrices. Otherwise, $X$ is \emph{irreducible}. A spectral characterization of nonnegative matrices is given by the celebrated \emph{Perron-Frobenius Theorem} \cite{berman1979nonnegative}.
\begin{prop}[Perron-Frobenius]\label{PF}
	Let $A \in \Rnn_{\geq 0}$. Then,
	\begin{enumerate}[i.]
		\item $\lambda_1(A) \geq 0$.
		\item If $\lambda_1(A)$ has algebraic
		multiplicity $m_0$, then $A$ has $m_0$ linearly independent
		nonnegative eigenvectors related to $\lambda_1(A)$.
		\item If $A$ is irreducible, then $m_0=1$, $\lambda_1(A)>0$ has a positive eigenvectors which is the only nonnegative eigenvector to $A$. 
		\item $A$ is \emph{primitive}, i.e., there exists a $k \in \mathds{N}$ such that $A^k > 0$, if and only if $A$ is irreducible with $\lambda_1(A) > |\lambda_2(A)|$. 
	\end{enumerate}
\end{prop}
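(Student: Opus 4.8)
This statement is the classical Perron--Frobenius theorem, so in a paper one would normally just cite \cite{berman1979nonnegative}; still, here is the route I would take. The plan is to prove everything first for a strictly positive matrix, then transfer parts~(i)--(ii) to a general $A\in\Rn$ by a vanishing perturbation, and finally obtain the irreducible statement~(iii) and the primitivity statement~(iv) by applying the positive case to the auxiliary matrices $(I+A)^{n-1}$ and $A^{k}$. For the positive case $A>0$, I would apply Brouwer's fixed point theorem to the continuous self-map $x\mapsto Ax/\|Ax\|_{1}$ of the simplex $\Delta=\{x\ge 0:\mathbf{1}^{\transp}x=1\}$, which is well defined since $Ax>0$ for $x\in\Delta$; a fixed point $v$ gives $Av=\lambda v$ with $\lambda=\|Av\|_{1}>0$, hence $v=Av/\lambda>0$. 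Running the same construction on $A^{\transp}$ yields a positive left eigenvector $u>0$; pairing shows the left and right ``Perron values'' coincide, and from $|\mu|\,|w|\le A|w|$ for any eigenpair $(\mu,w)$ one gets $|\mu|\,u^{\transp}|w|\le\lambda\,u^{\transp}|w|$, so $|\mu|\le\lambda$ and therefore $\lambda=\rho(A)=\lambda_{1}(A)$. Geometric simplicity and the strict inequality $|\mu|<\lambda$ for every other eigenvalue come from analysing equality in $A|w|\ge\lambda|w|$: strict inequality in some coordinate, multiplied by $A$, would force $A(A|w|)>\lambda A|w|$ and contradict $|\mu|\le\lambda$, so $A|w|=\lambda|w|$, whence $|w|>0$ and, tracking the arguments of the entries of $w$, $w$ is a scalar multiple of $v$. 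Algebraic simplicity then follows because the adjugate of $\lambda I-A$ is a positive multiple of $vu^{\transp}\ne 0$, so $\det(tI-A)$ has nonzero derivative at $t=\lambda$.

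For a general $A\in\Rn$, I would set $A_{\varepsilon}=A+\varepsilon\,\mathbf{1}\mathbf{1}^{\transp}>0$; since the spectral radius is continuous and monotone on $\Rn$, $\rho(A_{\varepsilon})\downarrow\rho(A)$ and the normalized Perron vectors $v_{\varepsilon}\in\Delta$ have a subsequential limit $v\in\Delta$ with $Av=\rho(A)v$, which gives~(i). For~(ii) I would pass to the Frobenius normal form of $A$ (block upper triangular with irreducible diagonal blocks): $\lambda_{1}(A)$ is the spectral radius of precisely the ``basic'' diagonal blocks, each supplying one strictly positive eigenvector of that block, and a back-substitution through the block structure completes these to exactly $m_{0}$ linearly independent nonnegative eigenvectors of $A$, the count matching the algebraic multiplicity.

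For~(iii), if $A$ is irreducible then $B:=(I+A)^{n-1}>0$, so by the positive case $\rho(B)=(1+\rho(A))^{n-1}$ is simple with a positive eigenvector $v$ that is, up to scaling, the only nonnegative eigenvector of $B$; since $A$ and $B$ share eigenvectors, $v$ is the simple Perron vector of $A$ and any nonnegative eigenvector of $A$ is proportional to $v>0$. Moreover $\lambda_{1}(A)>0$, because an irreducible $A\in\Rn$ with $n\ge 2$ has no zero row, so $A\mathbf{1}>0$ and $\rho(A)>0$. For~(iv): if $A^{k}>0$ then $A$ is irreducible and, the peripheral spectrum of the positive matrix $A^{k}$ being the single simple eigenvalue $\rho(A)^{k}$, $A$ has exactly one eigenvalue of modulus $\rho(A)$, namely $\lambda_{1}(A)$, so $\lambda_{1}(A)>|\lambda_{2}(A)|$; conversely, if $A$ is irreducible with $\lambda_{1}(A)>|\lambda_{2}(A)|$, then by~(iii) the eigenvalue $1$ of $A/\lambda_{1}(A)$ is simple and strictly dominant, so $\big(A/\lambda_{1}(A)\big)^{k}\to vu^{\transp}/(u^{\transp}v)>0$ and hence $A^{k}>0$ for $k$ large.

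The step I expect to be the real obstacle is~(ii) for a \emph{reducible} $A$: matching the number of linearly independent nonnegative eigenvectors of $\lambda_{1}(A)$ to its algebraic multiplicity genuinely requires the Frobenius normal form together with the combinatorics of basic classes, and is considerably more delicate than the positive and irreducible cases; everything else is routine once the positive case is established.
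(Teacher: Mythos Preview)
The paper does not prove this proposition at all: it is stated in the Preliminaries as the classical Perron--Frobenius theorem and supported only by the citation to \cite{berman1979nonnegative}. So there is nothing to compare against---your opening remark that ``in a paper one would normally just cite \cite{berman1979nonnegative}'' is exactly what the authors do.

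That said, a comment on your sketch. Your arguments for parts~(i), (iii) and~(iv) are standard and correct. For part~(ii), you rightly flag it as the delicate point, but in fact the statement as printed in the paper is imprecise: the \emph{algebraic} multiplicity $m_0$ of $\lambda_1(A)$ can strictly exceed the geometric multiplicity, already for
\[
A=\begin{pmatrix}1&1\\0&1\end{pmatrix}\in\mathbb{R}^{2\times 2}_{\ge 0},
\]
where $\lambda_1(A)=1$ has $m_0=2$ but only the single eigenvector $e_1$. Your proposed back-substitution through the Frobenius normal form would detect exactly this obstruction: when two basic classes are linked by a nonzero off-diagonal block, the extension to a full eigenvector forces a zero component and the count drops. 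The correct version (see \cite{berman1979nonnegative}) ties the number of independent nonnegative eigenvectors to the number of \emph{final} basic classes, not to the algebraic multiplicity. This does not affect the rest of the paper, since only parts~(iii) and~(iv) are actually invoked in the subsequent proofs.
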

$X \in \mathds{R}^{n \times n}$ is an \emph{upper Hessenberg matrix} if $x_{ij} = 0$ for all $i,j$ with $i > j+1$. The \emph{identity matrix} of any applicable size will be denoted by $I$ and its columns, the \emph{canonical basic unit vectors} in $\mathds{R}^n$, are written as $e_1,\dots,e_n$. Using the abbreviation $(k_1:k_2) = \{k_1,\dots,k_2\} \subset \mathds{N}$, we write $X_{\mathcal{I},\mathcal{J}}$ for \emph{sub-matrices} of $X \in \Rmn$, where $\mathcal{I} \subset (1:n)$ and $\mathcal{J} \subset (1:j)$. The \emph{element-wise absolute value} is written as $|X| = (|x_{ij}|)$ and we define $$\blkdiag(X_1,X_2) := \begin{pmatrix}
	X_1 & 0 \\
	0 & X_2
\end{pmatrix}$$ for $X_1 \in \mathds{R}^{n_1 \times n_1}$ and $X_2 \in \mathds{R}^{n_2 \times n_2}$. For a subset $\mathcal{S} \subset \mathds{R}^n$, we write $\inter(\mathcal{S})$, $\partial(\mathcal{S})$, $\conv(\mathcal{S})$ and $\cone(\mathcal{S})$ for its \emph{interior}, \emph{boundary}, \emph{convex hull} and \emph{convex conic hull}. We also use $\cone(X)$, $X \in \Rmn$, to denote the convex polyhedral cone that is generated by the columns of $X$. The set of \emph{complex numbers with positive real part} is denoted by $\mathds{C}_{>0}$.

\section{Nonnegative and Metzler Hessenberg forms}
In this section, we present our main results addressing the questions that were raised in the introduction. Trivially, all $A \in \Rn$ with $\sigma(A) \subset \mathds{R}_{\geq 0}$ are similar to a nonnegative Hessenberg matrix, simply by computing its Jordan normal form. Therefore, our investigations will start with $\sigma(A) \not \subset \mathds{R}_{\geq 0}$. We begin by noticing that not every nonnegative matrix is similar to a nonnegative Hessenberg matrix. 
\begin{prop}
	\label{lem:no_hess_1}
	For $A \in \mathbb{R}^{n \times n}_{\geq 0}$, the following are equivalent:
	\begin{enumerate}[i.]
		\item $\lambda_2(A) < 0$ has geometric multiplicity $n-1$.
		\item $A = c(uv^\transp - s I)$, where $u, v \in \mathds{R}^n_{>0}$, $0\leq  s \leq \min_{i} \{u_iv_i\}$ and $c > 0$.
	\end{enumerate}
\end{prop}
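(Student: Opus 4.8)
The plan is to prove the two implications separately. The argument is elementary; the two places that need a little care are the use of \Cref{PF} (together with $\trace(A) \ge 0$) to exclude that $\lambda_2(A)$ has full algebraic multiplicity $n$, and the sign-normalisation of the rank-one factors in the direction (i)$\Rightarrow$(ii).

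For (ii)$\Rightarrow$(i): absorbing $c$ into $u$ and $s$, it suffices to treat $A = uv^\transp - sI$ with $u,v \in \mathds{R}^n_{>0}$ and $0 < s \le \min_i u_iv_i$ (if $s=0$ then $A$ is rank one with $\lambda_2(A)=0$, the degenerate case). The rank-one matrix $uv^\transp$ has the simple eigenvalue $v^\transp u = \sum_i u_iv_i$ with eigenvector $u$, and the eigenvalue $0$ with eigenspace $\{x : v^\transp x = 0\}$ of dimension $n-1$; hence $A$ has spectrum $\{v^\transp u - s, -s, \dots, -s\}$, and solving $Ax = -sx$ gives $v^\transp x = 0$, so $-s$ has geometric multiplicity exactly $n-1$. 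For the magnitude ordering, each $u_iv_i \ge s$ forces $v^\transp u = \sum_i u_iv_i \ge ns \ge 2s$ (for $n \ge 2$), hence $v^\transp u - s \ge s = |-s|$; therefore $\lambda_1(A) = v^\transp u - s > 0$ and $\lambda_2(A) = -s < 0$ with geometric multiplicity $n-1$, which is (i). Note it is the hypothesis $s \le \min_i u_iv_i$, and not merely $s < v^\transp u$, that makes this ordering work.

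For (i)$\Rightarrow$(ii): write $\lambda_2(A) = -\mu$ with $\mu > 0$, so the $(-\mu)$-eigenspace has dimension $n-1$. By \Cref{PF}, $\lambda_1(A) \ge 0$, and since $|\lambda_1(A)| \ge |\lambda_2(A)| = \mu > 0$ we get $\lambda_1(A) > 0$; in particular $\lambda_1(A) \ne -\mu$. The algebraic multiplicity of $-\mu$ is at least $n-1$ and cannot equal $n$ --- otherwise $\trace(A) = -n\mu < 0$, impossible for $A \ge 0$ --- hence equals $n-1$; so $-\mu$ is semisimple, $A$ is diagonalisable, and $\sigma(A) = \{\lambda_1(A), -\mu, \dots, -\mu\}$. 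Then $A + \mu I$ is diagonalisable with spectrum $\{\lambda_1(A)+\mu, 0, \dots, 0\}$, so $\rk(A + \mu I) = 1$ and $A + \mu I = uv^\transp$ for nonzero $u,v$ with $v^\transp u = \lambda_1(A) + \mu > 0$.

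It remains to check that $u$ and $v$ can be chosen strictly positive, which is where nonnegativity of $A$ enters and which I expect to be the most delicate step. From $A = uv^\transp - \mu I \ge 0$: the diagonal gives $u_iv_i \ge \mu > 0$, so $u_i$ and $v_i$ are nonzero of the same sign for each $i$, and the off-diagonal gives $u_iv_j \ge 0$ for $i\ne j$. Replacing $(u,v)$ by $(-u,-v)$ if necessary, assume $u_1 > 0$; then $v_1 > 0$, and for $j \ne 1$ the inequality $u_1v_j \ge 0$ together with $v_j \ne 0$ yields $v_j > 0$ and hence $u_j > 0$. Thus $u,v \in \mathds{R}^n_{>0}$, and setting $c = 1$, $s = \mu$ gives $A = c(uv^\transp - sI)$ with $c > 0$ and $0 < s = \mu \le \min_i u_iv_i$, completing the proof.
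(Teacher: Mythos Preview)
Your proof is correct and follows the same overall strategy as the paper: both show that $A-\lambda_2(A)I$ has rank one and factor it as $uv^\transp$. The one genuine difference is in establishing $u,v\in\mathds{R}^n_{>0}$. The paper argues that $A$ must be irreducible (otherwise a block-triangular form would produce a second nonnegative eigenvalue, contradicting that $\lambda_1(A)>0$ is the unique nonnegative eigenvalue) and then invokes \Cref{PF}(iii) to obtain positive Perron left/right eigenvectors. You instead work directly from the entries of $A=uv^\transp-\mu I\ge 0$, using the diagonal to get $u_iv_i>0$ and the off-diagonal to propagate signs. Your route is more elementary and self-contained; the paper's route is shorter but relies on the irreducible case of Perron--Frobenius. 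You are also more explicit than the paper about why the algebraic multiplicity of $\lambda_2(A)$ cannot be $n$ and about the degenerate boundary case $s=0$.
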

\begin{proof}
	$i. \; \Rightarrow ii.$: Let $A \in \Rn$ have $\lambda_2(A) < 0$ with geometric multiplicity $n-1$. Then, $A - \lambda_2(A) I = (\lambda_1(A) - \lambda_2(A)) uv^\transp$, where $v$ and $u$ are left- and right-eigenvectors of $A$. Since $\lambda_1(A) > 0$ is the only nonnegative eigenvalue of $A$, $A$ cannot be reducible, which by  \cref{PF} implies that $u, v \in \mathds{R}^n_{>0}$. The remaining conditions ensure that $A$ is nonnegative with $A \neq 0$.\\
	$ii. \; \Rightarrow i.$: Follows by $\rk(A+csI) = 1$ and $\lambda_2(A) = -cs$.
\end{proof}
The matrices in \cref{lem:no_hess_1} have positive off-diagonal elements, which by their spectral characterization is a property that persists for all nonnegativity preserving similarity transformations. Therefore, for $n \geq 3$, these matrices provide a class of nonnegative matrices, which cannot be similar to a nonnegative Hessenberg form. This, however, raises the questions of how fundamental this property is. For the case of $n=3$, we will show that these are indeed the only such matrices. To do so, we will characterize when a second-order DT positive system is similar to a DT positive system in controller Hessenberg-form, which will result from the following general lemma that will also be used for the Metzler case later on.  

\begin{lem}\label{prop:boundary}
	Let $A \in \Rn$ be irreducible and $b \in \Rnv$ such that $Ab \neq \lambda_1(A)b$.
	Then, there exists a $T \in \Rn$ such that $T^{-1}AT = A$ and $T^{-1} b \in \partial(\Rnv)$.
\end{lem}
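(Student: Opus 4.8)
The plan is as follows. First, if $b$ already lies on $\partial(\Rnv)$ we may take $T=I$; so assume $b$ is interior, whence $b\neq 0$ and (since $Ab\neq\lambda_1(A)b$ forces $n\ge 2$) $A$ has eigenvalues other than $\lambda_1(A)$. By irreducibility and \cref{PF}, $\lambda_1:=\lambda_1(A)>0$ is simple with a positive right eigenvector $u>0$ and a positive left eigenvector $v>0$, and every other eigenvalue $\lambda$ satisfies $|\lambda|\le\lambda_1$ and $\lambda\neq\lambda_1$, hence $\operatorname{Re}\lambda<\lambda_1$.

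I would take $T=e^{t_0A}$ for a suitable $t_0\ge 0$. This $T$ is automatically nonnegative (every power of $A\ge 0$ is nonnegative, hence so is $e^{t_0A}=\sum_{k\ge 0}t_0^kA^k/k!$), invertible, and commutes with $A$, so $T^{-1}AT=A$; thus it only remains to produce $t_0>0$ with $e^{-t_0A}b\in\partial(\Rnv)$. Let $P=uv^{\transp}/(v^{\transp}u)$ be the spectral projection onto $\spn(u)$, so that $\ker P=v^{\perp}$ is the $A$-invariant complement on which $A$ has spectrum $\sigma(A)\setminus\{\lambda_1\}$, and write $b=\beta u+w$ with $\beta=(v^{\transp}b)/(v^{\transp}u)>0$ and $w=(I-P)b\in v^{\perp}$. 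The hypothesis $Ab\neq\lambda_1b$ is equivalent to $b\notin\spn(u)$, i.e.\ to $w\neq 0$, and this is the only place it is used.

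The crux is to show that $t\mapsto e^{-tA}b$ must leave $\Rnv$. Suppose not, i.e.\ $e^{-tA}b\ge 0$ for all $t\ge 0$. Using $v^{\transp}e^{-tA}=e^{-t\lambda_1}v^{\transp}$ and $e^{-tA}u=e^{-t\lambda_1}u$, the rescaled curve $\bar y(t):=e^{t\lambda_1}e^{-tA}b=\beta u+e^{t(\lambda_1I-A)}w$ satisfies $\bar y(t)\ge 0$ and $v^{\transp}\bar y(t)=v^{\transp}b$ for all $t$; since $v>0$, the section $\{x\ge 0:v^{\transp}x=v^{\transp}b\}$ is compact, so $\bar y$ is bounded. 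But $(\lambda_1I-A)|_{v^{\perp}}$ has all eigenvalues with positive real part and $w\neq 0$, so $\|e^{t(\lambda_1I-A)}w\|\to\infty$, contradicting boundedness of $\bar y$. Hence $e^{-t_1A}b\notin\Rnv$ for some $t_1>0$. Finally, $t\mapsto\min_i(e^{-tA}b)_i$ is continuous, is positive at $t=0$, and is negative at $t_1$, so by the intermediate value theorem it vanishes at some $t_0\in(0,t_1)$; there $e^{-t_0A}b\ge 0$ has a zero coordinate, i.e.\ $e^{-t_0A}b\in\partial(\Rnv)$, and $T=e^{t_0A}$ works.

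I expect the main obstacle to be this exit argument: one must exclude the possibility that $e^{-tA}b$ contracts toward the origin while remaining strictly inside the orthant forever --- which is precisely what happens in the excluded case $b\in\spn(u)$. The rescaling by $e^{t\lambda_1}$ (equivalently, shifting to $A-\lambda_1I$), the compactness of the cross-section $\{x\ge 0:v^{\transp}x=\text{const}\}$, and the spectral gap $\operatorname{Re}\lambda<\lambda_1$ for $\lambda\neq\lambda_1$ are what make this quantitative; the concluding intermediate-value step is only needed to land exactly on $\partial(\Rnv)$ rather than strictly outside it.
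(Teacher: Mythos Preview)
Your proof is correct, and it takes a genuinely different route from the paper's.

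The paper also builds $T$ from the commutant of $A$, but in two discrete/affine stages: it first shifts $A$ by a scalar to make it primitive and invertible, then shows the iterates $A^{-k}b$ must eventually leave $\Rnv$ (arguing that otherwise the normalized iterates accumulate on the complementary invariant subspace, producing a nonnegative vector there and contradicting irreducibility), and finally uses a continuity argument in $s$ on $\cone(A+sI)$ to land $b$ exactly on the boundary. Your argument replaces all of this by the single continuous flow $t\mapsto e^{-tA}b$: nonnegativity of $T=e^{t_0A}$ is automatic, invertibility needs no shift, and the exit from $\Rnv$ follows from one clean compactness-versus-blowup contradiction (the cross-section $\{x\ge 0:v^{\transp}x=v^{\transp}b\}$ is bounded while $e^{t(\lambda_1 I-A)}w$ is not). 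You also avoid the reduction to primitivity, since irreducibility alone gives $\operatorname{Re}\lambda<\lambda_1$ for $\lambda\neq\lambda_1$. What the paper's approach buys is that $T$ is a polynomial in $A$ (a power times a shift), which is slightly more explicit; what yours buys is a shorter, case-free argument with a single intermediate-value step.
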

\begin{proof}
	It suffices to consider the case with $b > 0$ and $A$ primitive such that $\sigma(A) \subset \mathds{C}_{>0}$. If the latter is not fulfilled, one should replace $A$ with $A+ k I$ with sufficiently large $k > 0$ in the subsequent discussion. 
	
	First assume that $b \notin \inter(\cone(A))$. Then as $\lim_{s \to \infty} \cone(A+sI) = \Rn$, the continuity of $(A+sI)^{-1}b$ in $s$ implies the existence of an $s^\ast \geq 0$ such that $b \in \partial(\cone(A+s^\ast I)$. Hence, $T = A+s^\ast I \in \Rn$ with $T^{-1}AT = A$ and $T^{-1}b \in \partial(\Rnv)$. 
	
	Next, we want show that if $Ab \neq \lambda_1(A)b$ and $b \in \inter(\cone(A))$, then there exists a smallest $k \in \mathds{N}$ such $A^{-k}b \notin \inter(\cone(A))$, meaning that the state-space transformation $A^{k-1}$ yields a pair $(A,A^{-k+1}b)$ as in the previous case. To this end, assume the contrary, i.e., $A^{-k}b \geq 0$ for all $k \in \mathds{N}$. Then, by \cref{PF}, $\lambda_1(A) = \lambda_n(A^{-1}) > |\lambda_2(A)| = |\lambda_{n-1}(A^{-1})|$, which is why $0 = \lim_{k \to \infty} \left[\min_{x \in \mathcal{V}\setminus \{0\}} \left \|\frac{A^{-k}b}{\|A^{-k}b\|} - x\right \| \right]$ with $\mathcal{V}$ being the subspace spanned by the eigenvectors to $\lambda_2(A),\dots,\lambda_n(A)$. This means that there exists an $x \in \mathcal{V} \cap \Rnv \setminus \{0 \}$. However, then  $x^\ast := \lim_{k \to \infty} \frac{A^k x}{\lambda_1(A)^k} \in \mathcal{V}$ is a nonnegative eigenvector corresponding to $\lambda_1(A)$, which contradicts that $A$ is irreducible. 
\end{proof}

A direct consequence of \cref{prop:boundary} is the following Corollary. 
\begin{cor}
	\label{pos_real2}
	Let $(A,b,c)$ be a DT positive system with $b \in \mathds{R}^2_{\geq 0} \setminus \{0\}$. Then, there exists a $T \in \Rtva$ such that $(T^{-1}AT,T^{-1}b,cT)$ is a DT positive system in controller Hessenberg-form if and only if  $Ab \neq \lambda_1(A)b$ in case of $\lambda_2(A) < 0$. 
\end{cor}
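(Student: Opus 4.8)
The plan is to prove the equivalence in Corollary~\ref{pos_real2} by treating the two-dimensional case concretely and invoking Lemma~\ref{prop:boundary}. First I would dispose of the easy direction. If $\lambda_2(A) < 0$ and $Ab = \lambda_1(A) b$, then $b$ is a nonnegative eigenvector associated with $\lambda_1(A)$; moreover $A$ is then irreducible (a reducible $2\times 2$ Metzler/nonnegative matrix with a negative eigenvalue would have both eigenvalues real, one of them $\lambda_1(A)\ge 0$ and the other $\lambda_2(A)<0$, so the only way $b$ could be an eigenvector for $\lambda_1$ while staying nonnegative forces, via Perron-Frobenius, a contradiction once we note that controller-Hessenberg form requires $T^{-1}b = e_1$ and $e_1$ is \emph{not} fixed up to scaling by the flow generated by $A$). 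The point is that in controller-Hessenberg form we need $T^{-1}b = e_1$, hence $b = T e_1$, i.e.\ $b$ is the first column of $T$; and then $T^{-1}AT$ upper Hessenberg in dimension $2$ is no constraint at all, so the \emph{only} obstruction is nonnegativity of $cT$ together with nonnegativity of $T$. But if $b$ is the Perron eigenvector, then $\cone(b)$ is an extreme ray that is invariant, and one checks directly that \emph{every} nonnegative $T$ with first column proportional to $b$ and with $T^{-1}AT$ nonnegative must have both columns of $T$ spanning the same invariant cone structure, which is impossible unless $A$ is a nonnegative multiple of a rank-one matrix plus a scalar — precisely the excluded case. I would write this out as a short explicit $2\times 2$ computation rather than appealing to heavier machinery.

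For the converse (the substantive direction), assume $Ab \neq \lambda_1(A) b$ whenever $\lambda_2(A) < 0$; I must produce $T \in \Rtva$ with $(T^{-1}AT, T^{-1}b, cT)$ a DT positive system in controller-Hessenberg form. Split into cases according to $\sigma(A)$. If $\sigma(A) \subset \mathbb{R}_{\ge 0}$, the Jordan form argument already mentioned in the text handles it: diagonalize (or put in Jordan form) by a nonnegative similarity, which automatically makes a $2\times 2$ matrix upper Hessenberg, and absorb $b$ and $c$ appropriately — here one uses that a $2\times2$ nonnegative matrix with nonnegative spectrum is similar via a nonnegative transformation to its Jordan form, and a $2\times 2$ upper-triangular nonnegative matrix is trivially Hessenberg. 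If $A$ has complex eigenvalues or a negative $\lambda_2(A)$ (with $Ab \neq \lambda_1(A)b$), then $A$ is irreducible (in dimension $2$, reducibility forces a nonnegative eigenvalue in each diagonal block, hence real spectrum with $\lambda_2 \ge 0$ only in degenerate sub-cases that I would rule out directly), so Lemma~\ref{prop:boundary} applies: there is $T_0 \in \Rtva$ with $T_0^{-1} A T_0 = A$ and $T_0^{-1} b \in \partial(\Rtvav)$, i.e.\ $T_0^{-1} b$ is a nonnegative multiple of $e_1$ or $e_2$. After a diagonal rescaling and, if necessary, the permutation swapping coordinates, we may take $T_0^{-1} b = e_1$. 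The swap and the rescaling are themselves nonnegative similarities, so the conjugated $A$ stays nonnegative, and in dimension $2$ it is automatically upper Hessenberg. Finally $cT_0$ (times the diagonal/permutation factor) is nonnegative because all the transformations used are nonnegative.

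The main obstacle I anticipate is the bookkeeping around reducibility and the "degenerate" spectra in the easy direction: one must be careful that when $\lambda_2(A) < 0$ and $A$ is reducible, the hypothesis $Ab \neq \lambda_1(A) b$ is either automatically satisfied or the conclusion still holds by a direct construction, and that the permutation used to turn $T_0^{-1}b$ from $e_2$ into $e_1$ does not destroy the Hessenberg property — in dimension $2$ it does not, since every $2\times 2$ matrix is upper Hessenberg, but this is exactly the place where the argument is special to $n = 2$ and does not generalize, which is worth flagging. A secondary subtlety is verifying that the hypothesis "$Ab \neq \lambda_1(A) b$ in case $\lambda_2(A) < 0$" is exactly what rules out $b \in \inter(\cone(A))$ being unavoidable: this is where I would lean on the second half of the proof of Lemma~\ref{prop:boundary}, noting that $Ab = \lambda_1(A)b$ is the only way the iteration $A^{-k} b$ can fail to eventually leave $\inter(\cone(A))$ when $A$ is irreducible, so the hypothesis is precisely the needed one.
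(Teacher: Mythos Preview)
Your overall architecture --- necessity via the triangular form forced by $Te_1 = b$, sufficiency via \cref{prop:boundary} in the irreducible case --- matches the paper's, and the necessity direction is essentially right (if over-narrated): once $T = (b\;\,p)$ and $Ab = \lambda_1(A)b$, one has $T^{-1}AT = \begin{pmatrix}\lambda_1(A) & \ast\\ 0 & \lambda_2(A)\end{pmatrix}\not\ge 0$, which is exactly the paper's one-line computation. Two minor remarks: a $2\times 2$ nonnegative matrix always has real spectrum (the discriminant is $(a_{11}-a_{22})^2 + 4a_{12}a_{21}\ge 0$), so the ``complex eigenvalues'' branch never occurs; and a reducible $2\times 2$ nonnegative matrix has both eigenvalues on its nonnegative diagonal, so the ``reducible with $\lambda_2<0$'' worry is empty.

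There is, however, a genuine gap in the case $\lambda_2(A)\ge 0$. You invoke the sentence from the introduction that $\sigma(A)\subset\mathbb{R}_{\ge 0}$ implies $A$ is similar to a nonnegative Hessenberg matrix via Jordan form, and say one can ``absorb $b$ appropriately''. But that remark concerns $A$ alone; controller-Hessenberg form additionally demands $T^{-1}b = e_1$, i.e.\ the \emph{first column of $T$ must be $b$}. A nonnegative similarity to Jordan form need not (and generally cannot) have $b$ as its first column: for irreducible $A$ the second eigenvector has mixed signs, so no nonnegative $T$ diagonalizes $A$ at all. Concretely, take $A = \begin{pmatrix}2&1\\1&2\end{pmatrix}$ and $b = (1,1)^\transp$; then $Ab = \lambda_1(A)b$, $\lambda_2(A)=1\ge 0$, \cref{prop:boundary} does not apply, and Jordan form is unreachable by a nonnegative similarity. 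The paper closes this gap with a different idea: set $T=(b\;\,p)$ from the outset, observe that $\hat A := A - \lambda_2(A)I \ge 0$ has rank at most one, and pick $p\ge 0$ so that $\cone(\hat A)\subset\cone(T)$ with $\det T\neq 0$; then $T^{-1}\hat A T \ge 0$ and hence $T^{-1}AT \ge \lambda_2(A)I \ge 0$. This rank-one containment argument is the missing ingredient in your $\lambda_2(A)\ge 0$ branch.
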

\begin{proof}
	First note that if $b \not > 0$, then $A$ has to be reducible for $b$ to be an eigenvector of $A$. Hence, it suffices to consider $b > 0$ and we want to find a $p \geq 0$ such that $T^{-1}AT \geq 0$ with $T = \begin{pmatrix}
		b & p
	\end{pmatrix} \geq 0$. We begin with the case $\lambda_2(A) \geq 0$, which also includes reducible $A$. Then, $\hat{A} := A - \lambda_2(A)I \geq 0$, because otherwise there exists an $s \geq 0$ such that $0 < (\lambda_1(A)-s)(\lambda_2(A) -s) = \det(A - sI) \leq 0$. Hence, since $\rk(\hat{A}) \leq 1$,  there exists a $p \geq 0$ such that $\cone(\hat{A}) \subset \cone(T)$ with $\det(T) \neq 0$ and, thus, $T^{-1}AT \geq \lambda_2(A)I \geq 0$.
	
	This leaves us with the case, when $A$ is irreducible and $\lambda_2(A) < 0$. If $Ab \neq \lambda_1(A) b$, then \cref{prop:boundary} can be applied. Otherwise, $T^{-1}A T = \begin{pmatrix}
		\lambda_1(A) & \ast \\\
		0 & \lambda_2(A) 
	\end{pmatrix}\not \geq 0.$ 
	
\end{proof}
We are ready to completely characterize the set of matrices in $\Rtre$ that are similar to nonnegative Hessenberg matrices.

\begin{thm}
	\label{thm_non3d}
	$A \in \Rtre$ is not similar to a nonnegative Hessenberg matrix if and only if $\lambda_2(A) < 0 $ has geometric multiplicity 2. 
\end{thm}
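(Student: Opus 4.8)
The plan is to prove the two implications separately; the forward one is short and the converse carries the weight. For ``$\lambda_2(A)<0$ of geometric multiplicity $2$ $\Rightarrow$ $A$ not similar to a nonnegative Hessenberg matrix'' I would argue by contradiction: a nonnegative upper Hessenberg matrix $H$ similar to $A$ inherits its spectrum and geometric multiplicities, so $\lambda_2(H)<0$ has geometric multiplicity $2$; \cref{lem:no_hess_1} then forces $H=c(uv^\transp-sI)$ with $u,v>0$ and $c>0$, whose $(3,1)$-entry $cu_3v_1$ is strictly positive, contradicting upper Hessenberg form.

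For the converse I would show that if $\lambda_2(A)$ is not a negative eigenvalue of geometric multiplicity $2$, then $A$ is similar to a nonnegative Hessenberg matrix. If $\sigma(A)\subset\mathbb{R}_{\ge0}$ this is the Jordan-form remark already made. If some off-diagonal entry vanishes, say $a_{ij}=0$ with $i\ne j$, then the permutation taking $j$ to the first and $i$ to the third coordinate turns $A$ into a nonnegative upper Hessenberg matrix, its $(3,1)$-entry becoming $a_{ij}=0$. So I may assume $\sigma(A)\not\subset\mathbb{R}_{\ge0}$ and that every off-diagonal entry of $A$ is strictly positive, so that $A$ is primitive. For $k\in\{1,2,3\}$ put $\mathcal I_k=\{1,2,3\}\setminus\{k\}$, $A_k:=A_{\mathcal I_k,\mathcal I_k}$ and $b^{(k)}:=A_{\mathcal I_k,\{k\}}>0$. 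Conjugating $A$ by the permutation sending $k$ to the first coordinate gives a block form with leading entry $a_{kk}$, lower-left block $b^{(k)}$ and lower-right block $A_k$; conjugating once more by $\blkdiag(1,T_2)$ with an invertible $T_2\in\Rtva$ yields a nonnegative matrix with vanishing $(3,1)$-entry — hence a nonnegative Hessenberg matrix — whenever $(A_k,b^{(k)},\ast)$ can be brought into controller-Hessenberg form. By \cref{pos_real2} this is possible unless $\lambda_2(A_k)<0$ and $b^{(k)}$ is a $\lambda_1(A_k)$-eigenvector of $A_k$. Thus it remains to rule out the possibility that these two conditions hold simultaneously for \emph{all} $k\in\{1,2,3\}$.

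Assume they do; in particular $b^{(k)}>0$ is a $\lambda_1(A_k)$-eigenvector of $A_k$ for every $k$. Writing $Ae_k=a_{kk}e_k+b^{(k)}$ (with $b^{(k)}$ read off in the coordinates $\mathcal I_k$), a short computation then shows $A^2e_k\in\spn\{e_k,Ae_k\}$, i.e.\ $W_k:=\spn\{e_k,Ae_k\}$ is a two-dimensional $A$-invariant subspace; moreover $W_1,W_2,W_3$ are pairwise distinct, because $Ae_j$ has positive entries off position $j$ and hence $e_k\notin W_j$ for $k\ne j$. I would now split on the eigenstructure of $A$. If $A$ has a non-real eigenvalue, its only real two-dimensional $A$-invariant subspace is the plane carrying the conjugate pair, so $W_1=W_2=W_3$ — contradiction. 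If $A$ has a $2\times2$ Jordan block, it has exactly two two-dimensional $A$-invariant subspaces, again contradicting the existence of three distinct $W_k$. If $A$ has three distinct real eigenvalues with eigenvectors $f_1,f_2,f_3$, then after relabelling $W_k=\spn\{f_j:j\ne k\}$, so the left eigenvector $w_k$ biorthogonal to the $f_j$ satisfies $w_k^\transp e_k=0$; but one of the $w_k$ is, up to scaling, the left Perron eigenvector of the irreducible matrix $A$, which by \cref{PF} is strictly positive and has no zero entry — contradiction. Finally, if $A$ is diagonalizable with a simple eigenvalue $\rho$ and a double eigenvalue $\nu$, then $A-\nu I$ has rank one, $A-\nu I=xy^\transp$, and strict positivity of the off-diagonal of $A$ forces $x$ and $y$ to have all entries of one common sign, hence $y^\transp x>0$; since $y^\transp x=\trace(A-\nu I)=\rho-\nu$, we get $\rho>\nu$, so either $\nu\ge0$ (whence $\sigma(A)\subset\mathbb{R}_{\ge0}$, excluded) or $\nu<0$, in which case $\lambda_1(A)\ge0>\nu$ forces $\lambda_1(A)=\rho$ and $\lambda_2(A)=\lambda_3(A)=\nu<0$ of geometric multiplicity $2$ — exactly the excluded situation. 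Each branch yields a contradiction, completing the proof.

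I expect the main obstacle to be this last case distinction: first, extracting from ``$b^{(k)}$ is a $\lambda_1(A_k)$-eigenvector for every $k$'' the clean statement that the three Krylov planes $W_k$ are $A$-invariant and pairwise distinct, and second — in the diagonalizable case with a repeated eigenvalue — deriving $\rho>\nu$ from the rank-one structure of $A-\nu I$ together with the sign pattern that strict positivity of the off-diagonal entries imposes on its two factors.
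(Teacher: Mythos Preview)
Your argument is correct, and it diverges from the paper's proof in the hard half of the converse. Both proofs open identically: handle an off-diagonal zero by a permutation, reduce via \cref{pos_real2} to the situation where the ``corner'' vectors are Perron eigenvectors of the corresponding $2\times2$ principal submatrices with $\lambda_2<0$. From here the paper uses only two partitions (deleting the last, respectively the first, row and column) but exploits both the right- and left-eigenvector conditions; it then computes explicit block-triangular Jordan forms to show that $\lambda_2(A_{11})$ and $\lambda_2(A_{22})$ are eigenvalues of $A$, proves they coincide by a rank/positivity argument, and finally exhibits two independent eigenvectors. You instead use all three partitions but only the right-eigenvector condition, and trade the explicit Jordan-form calculation for a structural count: the three Krylov planes $W_k=\spn\{e_k,Ae_k\}$ are $A$-invariant and pairwise distinct, which you then confront with the classification of real $2$-dimensional invariant subspaces in each Jordan type. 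The paper's route is shorter and pins down the offending eigenvalue directly; your route avoids the somewhat implicit use of the transposed system in the paper's appeal to \cref{pos_real2} for $v_l$ and $w_l$, and the invariant-subspace count is conceptually clean, at the price of a four-branch case split. Two small remarks on your write-up: in the Jordan-block case you should note that a repeated eigenvalue would force $\sigma(A)\subset\mathds{R}_{\ge0}$ (already excluded), so one is genuinely in the $J_2(\mu)\oplus J_1(\lambda)$, $\lambda\neq\mu$, situation where the ``exactly two'' claim holds; and in the three-distinct case the relabelling is a bijection, so the left Perron eigenvector is one of your $w_k$'s and acquires a zero entry --- exactly as you say.
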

\begin{proof}
	$\Leftarrow:$ Follows by \cref{lem:no_hess_1}.
	
	$\Rightarrow:$ First note that if $A \in \Rtre$ has an off-diagonal zero element, then there exists a permutation matrix $\pi \in \mathds{R}^{3 \times 3}$ such that $\pi^\transp A \pi$ is a nonnegative Hessenberg matrix. Therefore, we can assume that $$A = \begin{pmatrix}
		A_{11} & v_r\\
		v_l^{T} & a_{33}
	\end{pmatrix} = \begin{pmatrix}
		a_{11} & w_l^{T}\\
		w_r & A_{22}
	\end{pmatrix}$$ with $a_{ij} >0, \ 1\leq i \neq j \leq n.$ Next, if there does not exists a $\tilde{T} \in \Rtva$ such that $\blkdiag(\tilde{T},1)^{-1} \ A \  \blkdiag(\tilde{T},1)$ or $\blkdiag(1,\tilde{T})^{-1} \ A \ \blkdiag(1,\tilde{T})$ is nonnegative with at least one off-diagonal zero, then by \cref{pos_real2}, $v_r$ and $v_l^\transp$, as well as $w_r$ and $w_l^\transp$ are right- and left eigenvectors to $A_{11}$ and $A_{22}$, respectively, and $\lambda_2(A_{11}), \lambda_2(A_{22}) < 0$. 
	By computing Jordan normal forms of $A_{11}$ and $A_{22}$, we can assume that
	\begin{equation}
		\label{eq_jord}
		T_1^{-1}AT_1 = 
		\begin{pmatrix}
			\lambda_2(A_{11}) & 0 & 0 \\
			\geq 0     & \lambda_1(A_{11}) & 1 \\
			0     &  >0        & a_{33}
		\end{pmatrix}, \; \ T_2^{-1}AT_2 = \begin{pmatrix}
			a_{11} & > 0 & 0 \\
			1    & \lambda_1(A_{22})& \geq 0 \\
			0     &  0        & \lambda_2(A_{22}) 
		\end{pmatrix}
	\end{equation}
	with $T_1 = \blkdiag(T_{11},1)$ and $T_2 = \blkdiag(1,T_{22})$, and thus $\lambda_2(A_{11}),\lambda_2(A_{22}) \in \sigma(A)$. We want to show now through proof by contradiction that $\lambda_2(A) = \lambda_2(A_{11}) = \lambda_2(A_{22})$ with geometric multiplicity $2$. Assume that $\lambda_2(A_{11}) > \lambda_2(A_{22})$. Then, $\rk(A_{22} - \lambda_2(A_{11}) I) = 2$ and $\rk(A_{11} - \lambda_2(A_{11}) I) = 1$. Hence, if $(A - \lambda_2(A_{11}) I)x = 0$,  with $x \neq 0$, then $$\begin{pmatrix}
		x_2\\
		x_3
	\end{pmatrix} = -x_1(A_{22} - \lambda_2(A_{11}) I)^{-1}w_r =\dfrac{x_1}{\lambda_2(A_{11})-\lambda_2(A_{22})}w_r.$$
	Thus, choosing $x_1 >0$ implies $x \in \mathbb{R}^3_{> 0}$, which requires that $\lambda_2(A_{11}) \geq 0$ and, therefore,  $\lambda_2(A_{11}) \leq \lambda_2(A_{22})$. Analogously, $\lambda_2(A_{22}) \leq \lambda_2(A_{11})$, which proves that $\lambda_2(A_{22}) = \lambda_2(A_{11})$. Finally, let us look into the eigenspace of $\lambda_2(A_{22})$. We partition $T_{11} = \begin{pmatrix}
		v_{11} & v_{12}
	\end{pmatrix}$ and $T_{22} = \begin{pmatrix}
		v_{21} & v_{22}
	\end{pmatrix}$, which by \cref{eq_jord} implies that $\begin{pmatrix}
		v_{11}\\
		0
	\end{pmatrix}
	\ \text{and} \ \begin{pmatrix}
		0  \\
		v_{22}
	\end{pmatrix}$ are eigenvectors of $A$ corresponding to the eigenvalue $\lambda_2(A_{11})$. If these eigenvectors were not linearly independent, we could choose them such that $$\begin{pmatrix}
		v_{11}\\
		0
	\end{pmatrix} = \begin{pmatrix}
		0  \\
		v_{22}
	\end{pmatrix} = \begin{pmatrix}
		0 \\
		\star\\
		0
	\end{pmatrix} \geq 0, $$ which would require that $\lambda_2(A_{11}) \geq 0$ and, therefore, provides a  contradiction.
\end{proof}
Since Metzler matrices allow to freely shift the spectrum, we get the following analogue. 
\begin{cor}
	For every Metzler $A \in \mathds{R}^{3 \times 3}$ there exists a $T \in \Rtre$ such that $T^{-1}AT$ is a Metzler Hessenberg matrix. 
\end{cor}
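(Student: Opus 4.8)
The plan is to deduce this from the nonnegative characterization in \cref{thm_non3d} by a spectral shift. Given a Metzler $A \in \mathds{R}^{3 \times 3}$, only its diagonal entries can be negative, so $B := A + \alpha I \in \Rtre$ for every sufficiently large $\alpha > 0$; I would in addition pick $\alpha$ larger than $|\nu|$ for every real eigenvalue $\nu$ of $A$. The key observation is that $B$ is then never of the exceptional type excluded by \cref{thm_non3d}, i.e., $\lambda_2(B) < 0$ does not hold with geometric multiplicity $2$. Indeed, an eigenvalue of geometric multiplicity $2$ of a real $3 \times 3$ matrix must be real (a non-real eigenvalue of a real matrix is paired with its conjugate, leaving room for geometric multiplicity at most $1$), and the map $X \mapsto X + \alpha I$ preserves eigenspaces and hence geometric multiplicities; so if $B$ had a negative eigenvalue of geometric multiplicity $2$, it would be of the form $\nu + \alpha$ with $\nu$ a real eigenvalue of $A$, contradicting $\alpha > |\nu| \geq -\nu$.

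By \cref{thm_non3d} there is therefore a $T \in \Rtre$ with $T^{-1}BT$ a nonnegative upper Hessenberg matrix. It then remains to observe that $T^{-1}AT = T^{-1}BT - \alpha I$: subtracting $\alpha I$ alters only the main diagonal, so $T^{-1}AT$ is still upper Hessenberg, and its off-diagonal entries, being those of $T^{-1}BT$, are nonnegative. Hence $T^{-1}AT$ is a Metzler Hessenberg matrix, which is what is claimed.

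I do not expect a genuine obstacle here; the only point requiring care is the interaction with the ordering convention for $\sigma(A)$, which is why the argument is phrased in terms of the shift-invariant statement ``$B$ has a negative eigenvalue of geometric multiplicity $2$'' rather than directly in terms of $\lambda_2$. Everything else is the routine bookkeeping of a diagonal shift, together with the already-established fact that the $n=3$ obstruction (\cref{lem:no_hess_1}) is purely a negative-spectrum phenomenon that Metzler matrices can always translate away.
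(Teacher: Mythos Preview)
Your argument is correct and is precisely the spectral shift the paper has in mind when it writes ``Since Metzler matrices allow to freely shift the spectrum, we get the following analogue'' without further proof; you have simply spelled out why the shifted matrix $B=A+\alpha I$ avoids the exceptional case of \cref{thm_non3d}. One minor remark: the statement of \cref{thm_non3d} does not literally assert that the similarity can be taken with $T\in\Rtre$, but its proof constructs only nonnegative transformations (permutations and block-diagonals with $\tilde T\in\Rtva$), which is exactly how the paper itself reads it in this corollary.
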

In order to show that the same holds true in $\mathds{R}^{4 \times 4}$, we require an extension of \cref{pos_real2} to third-order CT positive systems. We will prove this via the following general lemma. 

\begin{lem}
	Let $A \in \Rn$ be irreducible with $\sigma(A) \subset \mathds{R}_{\geq 0}$ and $b \in \mathds{R}^n_{>0}$ be such that $Ab = \lambda_1(A)b$. Then, there exists a $T \in \Rn$ such that $T^{-1}AT \geq 0$ and $T^{-1}b = e_1$. \label{lem:irred_b_EV}
\end{lem}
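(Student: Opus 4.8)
The plan is to construct the columns of $T$ by hand. The requirement $T^{-1}b=e_1$ forces $b$ to be the first column of $T$, and since $Ab=\lambda_1(A)b$ the first column of $T^{-1}AT$ is then automatically $\lambda_1(A)e_1\ge 0$. So the task reduces to choosing nonnegative vectors $t_2,\dots,t_n$ such that $T=\begin{pmatrix}b & t_2 & \cdots & t_n\end{pmatrix}$ is invertible and $\cone(T)$ is $A$-invariant: $A\cone(T)\subseteq\cone(T)$ says precisely that $Ab$ and every $At_j$ are nonnegative combinations of the columns of $T$, i.e.\ that $T^{-1}AT\ge 0$.

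To build such a cone I would first peel off $b$ via the spectral splitting at $\lambda_1(A)$. Let $w\in\mathds{R}^n_{>0}$ be the left Perron eigenvector normalized by $w^\transp b=1$ (its positivity, and the positivity and simplicity of $\lambda_1(A)$, come from irreducibility through \cref{PF}). Then $V:=\{x\in\mathds{R}^n:\ w^\transp x=0\}$ is an $(n-1)$-dimensional $A$-invariant subspace complementary to $\spn(b)$, since $w^\transp Ax=\lambda_1(A)\,w^\transp x$; moreover $\sigma(A|_V)=\{\lambda_2(A),\dots,\lambda_n(A)\}\subset\mathds{R}_{\ge 0}$. A real operator whose spectrum consists of nonnegative reals is similar to its Jordan form, and that form is a nonnegative matrix (nonnegative diagonal, superdiagonal ones); hence there is a basis $v_2,\dots,v_n$ of $V$ in which $A|_V$ is represented by a nonnegative matrix $M$, with $\rho(M)=\lambda_2(A)<\lambda_1(A)$ strictly because $\lambda_1(A)$ is a simple eigenvalue of $A$ and all eigenvalues are real.

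Now set $t_j:=\alpha_j b+v_j$ with scalars $\alpha_j>0$ still to be chosen. For any such choice $T=\begin{pmatrix}b & t_2 & \cdots & t_n\end{pmatrix}$ is invertible, being $\begin{pmatrix}b & v_2 & \cdots & v_n\end{pmatrix}$ times an upper-triangular matrix with unit diagonal, and substituting $v_j=t_j-\alpha_j b$ into $At_j=\alpha_j\lambda_1(A)b+Av_j$ together with $Av_j=\sum_k m_{kj}v_k$ yields
\begin{equation*}
	T^{-1}AT=\begin{pmatrix}\lambda_1(A) & c^\transp\\ 0 & M\end{pmatrix},\qquad
	c_j=\lambda_1(A)\,\alpha_j-(M^\transp\alpha)_j ,
\end{equation*}
where $\alpha=(\alpha_2,\dots,\alpha_n)^\transp$. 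It remains to pick $\alpha>0$ with $\lambda_1(A)\alpha-M^\transp\alpha\ge 0$ while keeping every $\alpha_j$ large enough that $t_j=\alpha_j b+v_j$ is entrywise nonnegative, which is possible because $b>0$. Since $M^\transp\ge 0$ has spectral radius strictly below $\lambda_1(A)$, the Neumann series gives $\bigl(\lambda_1(A)I-M^\transp\bigr)^{-1}\mathbf 1=\lambda_1(A)^{-1}\sum_{k\ge 0}\bigl(M^\transp/\lambda_1(A)\bigr)^k\mathbf 1\ge\lambda_1(A)^{-1}\mathbf 1>0$; choosing $\alpha:=s\bigl(\lambda_1(A)I-M^\transp\bigr)^{-1}\mathbf 1$ then gives $\alpha>0$, $\lambda_1(A)\alpha-M^\transp\alpha=s\mathbf 1\ge 0$ (so $c\ge 0$), and $\alpha_j\ge s/\lambda_1(A)\to\infty$ as $s\to\infty$, which makes all $t_j$ nonnegative. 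With this $T$ we obtain $T\in\Rn$, $T^{-1}AT\ge 0$, and $T^{-1}b=e_1$, as required.

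The one genuinely delicate step is the last: the block $M$ can be made nonnegative on its own and the first column is free, but the connecting row $c$ and the nonnegativity of the columns $t_j$ are governed by the same parameters $\alpha_j$ and pull against each other; the Neumann-series ($M$-matrix) identity combined with the scaling freedom $s$ is what reconciles them. Everything else — existence of the invariant complement $V$, nonnegativity of the Jordan form of a nonnegative-real spectrum, and the strict inequality $\rho(M)<\lambda_1(A)$ — is routine and rests only on \cref{PF}.
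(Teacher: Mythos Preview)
Your proof is correct and follows essentially the same strategy as the paper: take $b$ as the first column, complete to a basis $v_2,\dots,v_n$ of the complementary invariant subspace in which the restricted operator is represented by its nonnegative Jordan form, then shift each $v_j$ by a large multiple $\alpha_j b$ to make the columns nonnegative while keeping the induced top row $c^\transp=(\lambda_1\alpha-M^\transp\alpha)^\transp$ nonnegative. The only difference is in how the shifts are chosen: the paper exploits the bidiagonal structure of the Jordan form to pick the $\alpha_j$ recursively via $\alpha_i\ge\alpha_{i-1}/(\lambda_1-\lambda_i)$, whereas you use the Neumann series for $(\lambda_1 I-M^\transp)^{-1}$ to produce all of them at once---a slightly slicker but equivalent argument.
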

\begin{proof}
	Let $V^{-1}AV = J$ be a Jordan normal form with ones above the diagonal and decreasing diagonal entries, where $V = \begin{pmatrix}
		b & v_2 & \dots &v_n
	\end{pmatrix}$. Then, there exists an $\alpha \in \mathds{R}^n_{\geq 0}$ with $\alpha_1 = 0$ such that 
	\begin{align*}
		T = \begin{pmatrix}
			b & \alpha_2 b + v_2 & \dots & \alpha_n b + v_n
		\end{pmatrix} = V (I+e_1 \alpha^\transp)\geq 0.
	\end{align*}
	Hence, $T^{-1}AT = (I - e_1\alpha^\transp)J(I+e_1\alpha^\transp) = J + e_1 \beta^\transp$ with $\beta_i = J^\transp \alpha + \lambda_1(A) \alpha$, i.e., $\beta_1 = 0$ and $\beta_i = \alpha_i(\lambda_1(A)-\lambda_i(A))$ or $\beta_i = (\lambda_1(A)-\lambda_i(A)) \alpha_i-\alpha_{i-1}$ for $i \geq 2$. Since $\lambda_1(A) - \lambda_i(A) > 0$ for all $i\geq 2$ by \cref{PF}, we can choose a sufficiently large $\alpha_i \geq \frac{\alpha_{i-1}}{\lambda_1(A)-\lambda_i(A))} > 0$ for the claim to hold. 
\end{proof}

\begin{thm}
	\label{thm:Hess_pos_sys3d}
	Let $(A,b,c)$ be a third-order CT positive system. Then, there exists a $T \in \Rtre$ such that $(T^{-1}AT,T^{-1}b,cT)$ is a CT positive system in controller-Hessenberg form if and only if $Ab \neq \lambda_1(A)b$ whenever $\sigma(A) \not \subset \mathds{R}$. 
\end{thm}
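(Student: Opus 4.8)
The plan is to carry out the block reduction sketched in the introduction, bringing the three--dimensional problem down to a two--dimensional one governed by \cref{pos_real2}. Two preliminary reductions help. First, if $T\in\Rtre$ is invertible then $cT\ge 0$ automatically, since $c\ge 0$ and each column of $T$ is nonnegative; so it suffices to find an invertible $T\in\Rtre$ with $T^{-1}b=e_1$ and $T^{-1}AT$ a Metzler upper Hessenberg matrix. Second, as in the proof of \cref{prop:boundary}, replacing $A$ by $A+kI$ with $k>0$ large changes neither the eigenvectors of $A$, nor the validity of $Ab=\lambda_1(A)b$, nor whether a conjugate $T^{-1}AT$ is upper Hessenberg with nonnegative off--diagonal part; hence we may assume $A\in\Rtre$ and, when $A$ is irreducible, that $A$ is primitive with $\sigma(A)\subset\mathds{C}_{>0}$. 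A reducible Metzler matrix in $\mathds{R}^{3\times3}$ has a $2\times2$ Metzler diagonal block, hence real spectrum, so $\sigma(A)\not\subset\mathds{R}$ forces $A$ irreducible; the reducible case is in any event settled directly, by permuting $A$ into block upper triangular form with diagonal blocks of sizes one and two, putting each block into nonnegative controller--Hessenberg form via \cref{pos_real2} (for the $2\times2$ block after a further shift making its lower eigenvalue nonnegative, so that the criterion of \cref{pos_real2} becomes vacuous), and reassembling, after one more permutation, into a Metzler Hessenberg matrix.

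Necessity is then short. Suppose $\sigma(A)\not\subset\mathds{R}$ and $Ab=\lambda_1(A)b$. In any realization with $T^{-1}b=e_1$ the first column of $T^{-1}AT$ equals $T^{-1}Ab=\lambda_1(A)e_1$, so $T^{-1}AT$, being also upper Hessenberg, is block upper triangular with a scalar block and a $2\times2$ lower block $M$ carrying the two remaining eigenvalues of $A$. But $M$ is Metzler, so $M_{12}M_{21}\ge 0$, its discriminant $(M_{11}-M_{22})^2+4M_{12}M_{21}$ is nonnegative, and $M$ has real eigenvalues --- contradicting $\sigma(A)\not\subset\mathds{R}$. (The equation $Ab=\lambda_1(A)b$ with $b\ne 0$ forces $\lambda_1(A)\in\mathds{R}$ via $b^{\transp}Ab=\lambda_1(A)\,b^{\transp}b$, so in the non--vacuous case the two eigenvalues of $M$ are the non--real conjugate pair.)

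For sufficiency, assume $A$ irreducible and $Ab\neq\lambda_1(A)b$ whenever $\sigma(A)\not\subset\mathds{R}$. If $Ab=\lambda_1(A)b$, then $\lambda_1(A)$ is real, hence (by the standing hypothesis) $\sigma(A)\subset\mathds{R}$, and (by irreducibility and \cref{PF}) $b$ is the positive Perron eigenvector; \cref{lem:irred_b_EV} then yields an invertible $T\in\Rtre$ with $T^{-1}b=e_1$ and $T^{-1}AT=J+e_1\beta^{\transp}\ge 0$, which is upper triangular and hence a Metzler upper Hessenberg matrix. Otherwise $b$ and $Ab$ are linearly independent, and \cref{prop:boundary} applied to $(A,b)$ supplies a nonnegative similarity fixing $A$ and sending $b$ into $\partial(\Rtrev)$, so we may assume $b$ has a zero entry. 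If $b$ is supported on a single coordinate, a transformation of the form $\blkdiag(\beta,T_2)$ reduces the task to putting the $2\times2$ Metzler principal submatrix of $A$ together with the attached (nonzero, by irreducibility) nonnegative input vector into controller--Hessenberg form, which \cref{pos_real2} achieves after a shift. If $b$ has exactly one zero entry, we choose the second column of $T$ inside the cone $\spn(b,Ab)\cap\Rtrev$ --- whose boundary contains the ray $\mathds{R}_{\ge 0}b$ --- so that the $(3,1)$ entry of $T^{-1}AT$ vanishes, and complete $T$ with a third nonnegative column chosen so that the resulting $2\times2$ lower block is Metzler; this last choice is once more a two--dimensional instance of \cref{pos_real2}.

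I expect the crux to be that final step: one must show that $\spn(b,Ab)\cap\Rtrev$ is genuinely two--dimensional and that its other extreme ray, suitably combined with a third column, clears the $(3,1)$ entry without spoiling the sign of the remaining off--diagonal entries. This is exactly where irreducibility and the fact that a polyhedral cone lying in a two--dimensional subspace has at most two extreme rays enter, and where one must rule out the degenerate configurations in which this cone collapses to $\mathds{R}_{\ge 0}b$ and the reduction stalls --- handled, as in the proof of \cref{thm_non3d}, by arguing that such an $A$ either has a zero off--diagonal entry, and thus a permutation Hessenberg form, or else contradicts irreducibility.
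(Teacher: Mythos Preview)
Your overall scaffolding---shift so that $A\in\Rtre$, split into reducible/irreducible, treat the Perron-eigenvector case via \cref{lem:irred_b_EV}, push $b$ to $\partial(\Rtrev)$ via \cref{prop:boundary}, and finish with \cref{pos_real2}---matches the paper, and your necessity argument is fine.

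The genuine gap is the case where $A$ is irreducible and $b$ has exactly one zero entry. You propose to take $p$ as the second extreme ray of $\spn(b,Ab)\cap\Rtrev$; this does give $(T^{-1}AT)_{31}=0$ and $(T^{-1}AT)_{21}\ge 0$, but your claim that the choice of the third column $q$ ``is once more a two--dimensional instance of \cref{pos_real2}'' is not substantiated. After fixing $b$ and $p$ you do \emph{not} have a nonnegative $2\times 2$ pair $(\tilde A,\tilde b)$ to which \cref{pos_real2} applies: the would-be $2\times 2$ block and its input vector depend on $q$, and you have not exhibited any $2\times 2$ nonnegative system that governs this choice. Nor have you ruled out that $Ap$ falls outside every cone of the form $\cone(b,p,q)$ with $q\ge 0$. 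The paper bypasses this difficulty with a different idea you are missing: with $b_3=0$ and $b_1,b_2>0$, it subtracts a rank-one correction $A_b:=A-b\alpha^{\transp}$, choosing $\alpha\in\Rtrev$ (and shifting the diagonal if needed) so that $A_b\in\Rtre$ has the pattern
\[
A_b=\begin{pmatrix}x_1&0&z_1\\0&y_2&z_2\\x_3&y_3&z_3\end{pmatrix},\qquad z_1=0\ \text{or}\ z_2=0,
\]
hence $A_b$ is \emph{reducible}. The reducible case then furnishes $p,q\ge 0$ with $T=(b\;p\;q)$ invertible and $T^{-1}A_bT\ge 0$, and since $T^{-1}b=e_1$ one gets $T^{-1}AT=T^{-1}A_bT+e_1(\alpha^{\transp}T)\ge 0$; the correction only adds a nonnegative first row. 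This rank-one reduction to the reducible case is the key step that your cone construction does not replace.

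A smaller issue: your reducible case is about $A$ alone and forgets $b$. Permuting $A$ to block-triangular form and putting the blocks in Hessenberg form does not produce $T^{-1}b=e_1$. The paper instead uses that, after a shift, $\hat A:=A-\lambda_3(A)I\ge 0$ has $\rk(\hat A)\le 2$, so $\cone(\hat A)\subset\cone(p,q)$ for some $p,q\ge 0$; then $T=(b\;p\;q)$ (chosen invertible) satisfies $T^{-1}AT\ge \lambda_3(A)I\ge 0$, and a final application of \cref{pos_real2} to the lower $2\times 2$ subsystem yields the Hessenberg shape. Your single-coordinate $b$ subcase is handled correctly.
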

\begin{proof}
	It suffices to assume that $A \in \Rtre$ and $\sigma(A) \subset \mathds{C}_{>0}$. We want to find a $T = \begin{pmatrix}
		b & p & q\end{pmatrix} \in \Rtre$ such that $T^{-1} A T = \begin{pmatrix}\ast & \tilde{c}\\
		\tilde{b} & \tilde{A}
	\end{pmatrix}\geq 0$. Then applying \cref{pos_real2} to $(\tilde{A},\tilde{b},\tilde{c})$ yields the result. Obviously, such a $T$ cannot exist if $Ab = \lambda_1(A)b$ and $\sigma(A) \not \subset \mathds{R}$, which is why we want to consider all other cases. 
	
	Let us first assume that $A$ is reducible. As in the proof to \cref{pos_real2},  $\hat{A} := A - \lambda_3(A)I \geq 0$ with $\rk(\hat{A}) \leq 2$. Therefore, there exist $p,q  \geq 0$ such that $\cone(\hat{A}) \subset \cone(\{p,q\}) \subset \cone(T)$ and $\det(T) \neq 0$, which shows that $T^{-1}AT \geq \lambda_3(A)I \geq 0$. 
	
	Let $A$ be irreducible, now. If $Ab = \lambda_1(A)b$ and $\sigma(A) \subset \mathds{R}_{\geq 0}$, then the result follows by \cref{lem:irred_b_EV}. Otherwise, by our assumption and \cref{prop:boundary}, we can assume that $b_3 = 0$, $b_1,b_2 > 0$ and that the diagonal entries of $A$ are sufficiently large such that
	$${A}_b := {A} - b \alpha^\transp = \begin{pmatrix}
		x_1 & 0 & z_1 \\
		0   & y_2 & z_2 \\
		x_3 & y_3 & z_3
	\end{pmatrix} =: \begin{pmatrix}
		x & y & z
	\end{pmatrix} \Rtre,$$
	with $\alpha \in \Rtrev$ and $z_1 = 0$ or $z_2 = 0$. Hence, $A_b$ is reducible and, thus, as above there exist $p,q \geq 0$ such that $T^{-1}A_b T  \geq 0$, which implies that $T^{-1}AT = T^{-1}A_bT + e_1 \alpha^\transp T \geq 0$.

\end{proof}
\cref{thm:Hess_pos_sys3d} is a new characterization of a standard-form for third-order CT positive system. In particular, it implies that every controllable third-order CT positive system has a controller Hessenberg-form, which keeps the system realization positive: this has only be known for minimal second-order systems \cite{farina2011positive}. As in the proof to \cref{thm_non3d}, we will use this result to prove our final characterization.

\begin{thm}
	\label{thm_metz4d}
	For every Metzler $A \in \mathbb{R}^{4 \times 4}$ there exists a $T \in \Rfyra$ such that $T^{-1}AT$ is a Metzler Hessenberg matrix. 
\end{thm}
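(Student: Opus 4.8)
The plan is to carry out the recursive block reduction of the introduction one step beyond the third-order case. Since adding a large multiple of $I$ changes neither the Metzler property nor the existence of a nonnegative Hessenberg-conjugating transformation, I may assume $A\in\Rfyra$. Write $A=\begin{pmatrix} a_{11} & r \\ b & A_2\end{pmatrix}$ with $A_2\in\Rtre$, $b\in\Rtrev$, $r\in\mathbb{R}^{1\times 3}_{\geq 0}$, and look for $T_2\in\Rtre$ such that $(T_2^{-1}A_2T_2,\,T_2^{-1}b,\,rT_2)$ is a CT positive system in controller-Hessenberg form; then $T=\blkdiag(1,T_2)$ makes $T^{-1}AT$ a Metzler upper Hessenberg matrix. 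By \cref{thm:Hess_pos_sys3d} such a $T_2$ exists unless $b=0$ or we are in the \emph{bad case}, namely $\sigma(A_2)\not\subset\mathbb{R}$ and $A_2b=\lambda_1(A_2)b$. When $b=0$ the matrix is block triangular and the claim follows from the corollary to \cref{thm_non3d} applied to $A_2$, so the whole difficulty is to avoid the bad case.

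First I would dispose of reducible $A$: a permutation brings it to block upper triangular form, and if the leading coordinate is chosen inside the top block, the trailing $3\times 3$ block is itself block triangular and hence has real spectrum (its constituent blocks have size at most $2$ whenever $b\neq 0$), so \cref{thm:Hess_pos_sys3d} applies unconditionally and the composite transformation stays nonnegative. Thus I may assume $A$ is irreducible, and after a further shift primitive with $\sigma(A)\subset\mathbb{C}_{>0}$; primitivity forces $b\neq 0$, for otherwise $e_1$ would be a nonnegative eigenvector off the Perron ray (\cref{PF}).

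The heart of the argument is then to show that, among the four admissible choices of leading coordinate---all admissible because permutations preserve primitivity---at most one can be bad. If coordinate $i$ is bad, then, writing $b^{(i)}$ for column $i$ of $A$ with its diagonal entry deleted, the plane $W^{(i)}:=\spn\{e_i,b^{(i)}\}$ is two-dimensional and $A$-invariant, $A$ acts on it as a nonnegative $2\times 2$ matrix, and a characteristic-polynomial computation (the complementary factor being a quadratic with manifestly nonnegative discriminant) identifies $\sigma(A|_{W^{(i)}})$ with the two real eigenvalues $\nu_1,\nu_2$ of $A$, the remaining two eigenvalues of $A$ forming the non-real conjugate pair of $A_{\hat\imath}$. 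By primitivity $\nu_1\neq\nu_2$, so $A$ has four distinct eigenvalues and there is a \emph{unique} two-dimensional $A$-invariant subspace with spectrum $\{\nu_1,\nu_2\}$. Hence if coordinates $i\neq j$ were both bad we would get $\spn\{e_i,e_j\}=W^{(i)}=W^{(j)}$, which forces columns $i$ and $j$ of $A$ to be supported on $\{i,j\}$ and therefore $A$ to be reducible---a contradiction. A non-bad coordinate therefore exists; permuting it to the front and invoking \cref{thm:Hess_pos_sys3d} completes the proof.

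The step I expect to be the main obstacle is precisely this dichotomy for the bad case: in contrast with the third-order nonnegative setting, one cannot resolve the obstruction by shifting, and must instead exploit the rigidity of the eigenstructure (the bad case pins $\sigma(A)$ down to two distinct reals plus one conjugate pair) together with the coordinate-relabeling freedom, in the spirit of the closing argument of \cref{thm_non3d}. Beyond \cref{thm:Hess_pos_sys3d}, the third-order Metzler corollary, and Perron-Frobenius---all available---the only delicate bookkeeping is to verify that each $W^{(i)}$ is genuinely two-dimensional and carries the asserted spectrum in the various degenerate subcases.
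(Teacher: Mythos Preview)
Your proposal is correct and takes a genuinely different route from the paper's. Both arguments hinge on \cref{thm:Hess_pos_sys3d}: one seeks a coordinate $i$ to peel off so that the third-order controller-Hessenberg result applies to the remaining principal $3\times 3$ block together with column $i$. The difference lies in how the obstruction is eliminated. The paper assumes the bad case holds simultaneously for the first- and last-coordinate partitions (and, implicitly, for $A^{\transp}$, which is where the claim that $v_l^{\transp}$ and $w_l^{\transp}$ are left Perron eigenvectors comes from); it then computes real diagonal forms of $A_{11}$ and $A_{22}$, matches the two resulting positive eigenvectors of the full $A$ via irreducibility, and extracts an explicit rank-one correction $kv_re_1^{\transp}$ that makes $A_{11}$ reducible, so that \cref{thm:Hess_pos_sys3d} applies after all. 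You instead observe that a single bad coordinate already forces $\sigma(A)$ to consist of two distinct reals and one non-real conjugate pair (via the Schur-complement factorisation $\chi_A(t)=q(t)\bigl[(a_{11}-t)(\lambda-t)-rb\bigr]$, which identifies the non-real pair of $A_2$ as eigenvalues of $A$), whence the two-dimensional real-spectrum $A$-invariant subspace is unique; two bad coordinates would place both $e_i$ and $e_j$ in that subspace, making $A$ reducible. Your argument is shorter, avoids the real-diagonal-form computations, and sidesteps the tacit appeal to $A^{\transp}$; the paper's approach, though heavier, produces a concrete transformation (the rank-one subtraction) that may be of independent algorithmic interest. The only places in your sketch that deserve a line of care are the $k=1$ reducible subcase (where the trailing block need not itself be block triangular, but $b=0$ so the Metzler $3\times 3$ corollary handles it directly) and the explicit verification that the quotient factor of $\chi_A$ carries the non-real pair of $A_2$, since the uniqueness of $W^{(i)}$ hinges on that.
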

\begin{proof}
	It suffices to show that there exists a permutation matrix $\pi \in \Rfyra$ such that \cref{thm:Hess_pos_sys3d} can be applied to $(A_{\pi},b_{\pi},c_{\pi})$, where 
	$\pi^\transp  A \pi = \begin{pmatrix}
		A_{\pi} & b_{\pi}\\
		c_{\pi} & d_{\pi}
	\end{pmatrix}$.
	Let us assume the contrary. Then, $A$ must be irreducible and for the partition 
	\begin{align*}
		A = \begin{pmatrix}
			A_{11} & v_r\\
			v_l^\transp & a_{44}\end{pmatrix} = \begin{pmatrix}
			a_{11} & w_l^\transp\\
			w_r & A_{22}
		\end{pmatrix},
	\end{align*}
	it must hold that $v_r$ and $v_l^\transp$, as well as $w_r$ and $w_l^\transp$ are positive right- and left eigenvectors to irreducible $A_{11}$ and $A_{22}$, respectively with $\sigma(A_{11}),\sigma(A_{22})  \not \subset \mathds{R}$. Hence, by computing real diagonal forms to $A_{11}$ and $A_{22}$, we can assume that
	\begin{align*}
		T_1^{-1}AT_1 = \begin{pmatrix}
			\alpha & -\beta & 0 & 0\\
			\beta & \alpha  &  0  & 0\\
			0  & 0      &   \lambda_1(A_{11}) & 1\\
			0 & 0 & >0 & a_{44}
		\end{pmatrix}, \; T_2^{-1}AT_2 = \begin{pmatrix}
			a_{11} & >0 & 0 & 0\\
			1        & \lambda_1(A_{22}) & 0 & 0 \\
			0        &  0                           &  \alpha & -\beta\\
			0        &   0 &                     \beta & \alpha
		\end{pmatrix}
	\end{align*}
	for some $\alpha, \beta \in \mathds{R}$, $T_1 = \blkdiag(T_{11},1)$, $T_2 = \blkdiag(1,T_{22})$, where ${T_{11}}_{(1:3),\{3\}} = v_r$ and ${T_{22}}_{(1:3),\{1\}} = w_r$. Further, diagonalizing the remaining Metzler blocks $(T_1^{-1}AT_1)_{(3:4),(3:4)}$ and  $(T_1^{-1}AT_1)_{(1:2),(1:2)}$ with $\hat{T}_{11} = \begin{pmatrix}p & \ast \end{pmatrix} \in \mathds{R}^{2 \times 2}$ and $\hat{T}_{22} = \begin{pmatrix}\ast & q \end{pmatrix}\in \mathds{R}^{2 \times 2}$, $p, q  \in \mathds{R}^{2}_{> 0}$, respectively, then shows that 
	\begin{align*}
		T_1 \blkdiag(I,\hat{T}_{11}) e_3 &= \begin{pmatrix}
			{v_r}_1 p_1  & {v_r}_2 p_1 & {v_r}_3 p_1 & p_2
		\end{pmatrix}^\transp\\
		\intertext{and}
		T_2 \blkdiag(\hat{T}_{22},I) e_2 &= \begin{pmatrix}
			q_1 & {w_r}_1 q_2  & {w_r}_2 q_2 & {w_r}_3 q_2	\end{pmatrix}^\transp
	\end{align*}
	are positive eigenvectors to $\lambda_1(A)$, which by \cref{PF} and the irreducibility of $A$ are linearly dependent. Consequently, $A_{11}e_1 = \begin{pmatrix}
		a_{11} & {v_{r}}_2 k & {v_r}_3 k
	\end{pmatrix}^\transp$ for some $k > 0$, which is why there exists an $s \geq 0$ such that $\hat{A}_{11} := A_{11} - k v_r e_1^\transp + sI \geq 0$ is reducible. Thus, by \cref{thm:Hess_pos_sys3d}, there exists a $T_p \in \Rtre$ such that $(T_p^{-1}\hat{A}_{11}T_p,T_p^{-1}v_r,v_lT_p)$ is a CT positive system in controller-Hessenberg form. However, as this also implies that $T_p^{-1}A_{11}T_p$ is a Metzler Hessenberg matrix, we have a contradiction. 
\end{proof}

Note that while \cref{thm:Hess_pos_sys3d} remains true in DT if $A$ is sufficiently diagonally dominant, in general this is not true, even if $A$ is similar to a nonnegative Hessenberg matrix. To see this, let us consider
\begin{align}
	A = \begin{pmatrix}
		0  &   0  &  14\\ 
		0  &    6  &   0\\
		15 &    4 &     6
	\end{pmatrix} = \begin{pmatrix}
		x & y & z
	\end{pmatrix}, \ b = \begin{pmatrix}
		1\\
		1\\
		0
	\end{pmatrix}. \label{eq:counter_ex}
\end{align}
The eigenvalues of $A$ are distinct, which by \cref{thm_non3d} implies that $A$ is similar to a nonnegative Hessenberg matrix. If there exists a $T = \begin{pmatrix}
	b & p & q
\end{pmatrix} \in \Rtre$ with $T^{-1}AT \in \Rtre$, it must hold that $A^kb \in \cone(T)$ for all $k \in \mathds{N}$. Equivalently, $\Pi(A^kb) \in \conv(\{\Pi(b), \Pi(q),\Pi(q)\})$, $k \in \mathds{N}$, with $\Pi(x) := \frac{x}{\sum_{i=1}^3 x_i}$ denoting the projection onto the simplex simplex $\mathcal{S} = \{x \in \Rtrev: \sum_{i=1}^3 x_i = 1 \}$. Since $\mathcal{S}$ can be mapped bijectively onto $\conv(\{e_1,e_2\})$, this is possible if and only if the same applies when replacing $\Pi$ with $\hat{\Pi}(x) := \Pi(x)_{(2:3)}$. In other words, for $T$ to exist, there has to be a triangle within $\conv(\{e_1,e_2\})$ that has $\hat{\Pi}(b)$ as a corner point and surrounds all $\hat{\Pi}(A^kb)$. That this is impossible can be seen in \cref{fig_count}. In particular, since $c^\transp \in \Rtrev$ can be chosen arbitrarily, even removing the nonnegativity constraint on $T$ will not always allow us to achieve a DT positive system $(T^{-1}AT,T^{-1}b,cT)$ in controller-Hessenberg form.

\begin{figure}
	\centering
	\begin{tikzpicture}
		\begin{axis}[xmin =0, xmax =1, ymin =0, ymax = 1,width = 12cm, height = 8 cm]
			\addplot+[only marks, thin, blue, mark = *, mark options={fill=blue}] coordinates {   
				(5.0000000e-01, 0.0000000e+00)
				(2.4000000e-01,   7.6000000e-01)
				(8.1818182e-02,   3.1363636e-01)
				(3.0379747e-02,   6.9789030e-01)
				(9.9401749e-03,   4.5724804e-01)
				(3.4601522e-03,   6.2515018e-01)
				(1.1464765e-03,   5.1553759e-01)
				(3.9146805e-04,   5.8886733e-01)
				(1.3090841e-04,   5.4039031e-01)
				(4.4372480e-05,   5.7255958e-01)
				(0.0000000e-00,   5.5972972e-01)}; \label{marker:Ab}  
			\addplot [dashed, black] coordinates { (0,0) (1,0) (0,1) (0,0)}; \label{line:triangle}
			\node[inner sep=0cm] at (axis cs: 0.5,0) (b){};
			\node at (b) [above,xshift = .3 cm]{$\hat{\Pi}(b)$};
			\node[inner sep=0cm] at (axis cs: 0.24,0.76) (Ab){};
			\node[inner sep=0cm] at (axis cs: 0,0.5597) (Ainfb){};
			\node at (Ab)[right, xshift = .1 cm]{$\hat{\Pi}(Ab)$};
			\node[inner sep=0cm] at (axis cs: 8.1818182e-02,3.1363636e-01) (A2b){};
			\node at (A2b)[below]{$\hat{\Pi}(A^2b)$};
			\node at (Ainfb)[right]{$\hat{\Pi}(A^\infty b)$};
			\node[inner sep=0 cm] at (axis cs: 0,0.375) (d){};
			\addplot[dotted,blue,thin] coordinates {(0.5,0)  (0.24,0.76) (0,0.5597) (0.5,0)}; \label{line:triangel_narrow}
		\end{axis}
		
	\end{tikzpicture}
	\caption{Illustration to counter example \cref{eq:counter_ex}: If there exists a $T = (b \; p \; q) \geq 0$ with $T^{-1}AT \geq 0$, then $\ref{marker:Ab}\; \hat{\Pi}(A^k b) \in \conv(\{  \hat{\Pi}(b), \hat{\Pi}(p), \hat{\Pi}(q)\}) \subset \conv(\{e_1,e_2\})$ for all $k \in \mathds{N}$. However, as $\hat{\Pi}(b)$, $\hat{\Pi}(Ab), \lim_{k \to \infty} \hat{\Pi}(A^k b) \in \partial(\conv(\{e_1,e_2\}))$, this is impossible as indicated by the triangle \ref{line:triangel_narrow}.  \label{fig_count}}
\end{figure}
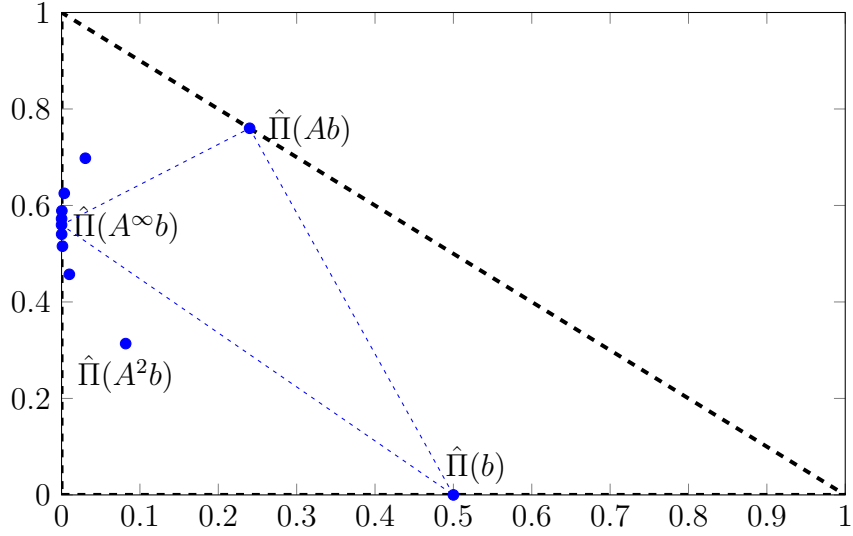

Unfortunately, we do not know whether \cref{thm:Hess_pos_sys3d} extends beyond systems with state-space dimension $3$ and as such it is hard to extend \cref{thm_metz4d} to $n > 4$. To elaborate on this issue, note that \cref{thm:Hess_pos_sys3d} is based on proving that for all pairs of irreducible $A \in  \mathds{R}^{3 \times 3}_{\geq 0}$ and $b \in \partial(\mathds{R}^{3}_{\geq 0})$ as well as all pairs of reducible $A \in  \mathds{R}^{3 \times 3}_{\geq 0}$ and $b \in \mathds{R}^{3}_{\geq 0}$, there exist $T = \begin{pmatrix}
	b & t_2 & t_3
\end{pmatrix}$, $N \in \mathds{R}^{3 \times 3}_{\geq 0}$ and $s \in \mathds{R}$ such that 
\begin{equation}
	A - sI = TN. \label{eq:prob_TN}
\end{equation}
We were able to establish this result, because rank and nonnegative rank of $A-sI$ coincide if it they are bounded by $2$. Extending such a characterization to higher dimensions is a crucial challenges for further advances and lies within the scope of nonnegative matrix factorization \cite{gillis2020nonnegative}. Nonetheless, it is worth pointing out the following variants.
\begin{prop} \label{prop:boundary_e1}
	Let $b \in \mathds{R}^n$ and $A \in \mathds{R}^{n \times n}$ be diagonalizable with $V^{-1}AV = D$ such that $|V^{-1}e_1| > 0$ and $|V^{-1}b| > 0$. Then, there exists a $T \in \mathds{R}^{n \times n}$ such that $T^{-1}AT = A$ and $T^{-1}b = e_1$.
\end{prop}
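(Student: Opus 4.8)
The idea is to translate the statement into one about the commutant of $A$. Since $T$ must be invertible, requiring $T^{-1}AT = A$ and $T^{-1}b = e_1$ is the same as requiring $AT = TA$ and $Te_1 = b$. Passing to the eigenbasis, write $V^{-1}AV = D$ and set $\hat{T} := V^{-1}TV$, $u := V^{-1}e_1$, $w := V^{-1}b$; the task becomes: find an invertible $\hat{T}$ commuting with $D$, with $\hat{T}u = w$, such that $T = V\hat{T}V^{-1}$ is real. By hypothesis, $u$ and $w$ have no zero entries.

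I would then use the structure of the commutant of a diagonal matrix. Partition $(1:n)$ into index sets $S_1,\dots,S_r$ collecting the coordinates that belong to a common eigenvalue of $A$ (so the columns of $V$ indexed by $S_j$ span one eigenspace). A matrix commutes with $D$ exactly when it is block diagonal with respect to this partition, the diagonal blocks $M_j \in \mathbb{C}^{|S_j| \times |S_j|}$ being unconstrained; hence $\hat{T}u = w$ splits into the independent conditions $M_j u_{S_j} = w_{S_j}$, and $\hat{T}$ is invertible iff each $M_j$ is. Since $u_{S_j}$ and $w_{S_j}$ are nonzero vectors, I can extend each to a basis of $\mathbb{C}^{|S_j|}$ and take $M_j$ to be the invertible matrix carrying the first basis onto the second, with $u_{S_j} \mapsto w_{S_j}$. (If $|S_j| = 1$, simply $M_j = w_{S_j}/u_{S_j} \neq 0$.) Assembling the $M_j$ yields an invertible complex $\hat{T}$ with $\hat{T}u = w$.

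To obtain a real $T$, I would fix $V$ so that its columns respect complex conjugation: columns spanning a real eigenspace chosen real, and the columns for a non-real eigenvalue $\mu$ chosen to be the entrywise conjugates of those for $\bar\mu$. Then, for a conjugate pair of index sets $S_j, S_{j'}$ one has $u_{S_{j'}} = \overline{u_{S_j}}$ and $w_{S_{j'}} = \overline{w_{S_j}}$, while $u_{S_j}, w_{S_j}$ are real whenever $\mu_j$ is; choosing $M_j$ real for real $\mu_j$ and $M_{j'} = \overline{M_j}$ for a conjugate pair makes $\hat{T}$ conjugation-symmetric in the same way $V$ is, whence $T = V\hat{T}V^{-1}$ is real --- and it still commutes with $A$, is invertible, and maps $e_1$ to $b$.

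The only point requiring care is reconciling the invertibility freedom in the blocks with the conjugate-pairing needed for reality; all other steps --- the block-diagonal description of the commutant and the solvability of $M_j u_{S_j} = w_{S_j}$ by an invertible $M_j$ --- are routine. As a remark, when $A$ has simple eigenvalues one can bypass the blockwise construction entirely: any polynomial $p$ with $p(\mu_i) = w_i/u_i$ (available by Lagrange interpolation) gives $T = p(A)$, automatically real, invertible and commuting with $A$.
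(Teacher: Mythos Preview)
Your approach is essentially the paper's: both construct $T$ in the commutant of $A$ by passing to the eigenbasis. The paper simply takes $\hat{T} = E := \diag(w_1/u_1,\dots,w_n/u_n)$ --- the diagonal special case of your block-diagonal $M_j$ --- and sets $T = VEV^{-1}$, so your block argument is more general than needed; your careful conjugate-pairing to force $T \in \mathds{R}^{n\times n}$ is a detail the paper leaves implicit.
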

\begin{proof}
	Let $\alpha := V^{-1}e_1$ and $\beta := V^{-1} b$. Then, $T := V E V^{-1}$ with $E : = \diag(\frac{\beta_1}{\alpha_1},\dots,\frac{\beta_n}{\alpha_n})$ is invertible and $T^{-1}b = e_1$. Further, since $E^{-1} D E = D$, it follows that $T^{-1}AT = VDV^{-1} = A$.
\end{proof}
In conjunction with \cref{thm_metz4d}, \cref{prop:boundary_e1} yields the following partial extension of \cref{thm:Hess_pos_sys3d}.
\begin{cor}
	Let $b \in \mathds{R}^{4}_{\geq 0}$ and Metzler $A \in \mathds{R}^{4 \times 4}$ be as in  \cref{prop:boundary_e1}. Then, there exists a $T \in \mathds{R}^{4 \times 4}$ such that $(T^{-1}AT,T^{-1}b)$ is in CT positive controller-Hessenberg form. 
\end{cor}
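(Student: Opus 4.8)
The plan is to produce $T$ as a product $T = T_1\,\blkdiag(1,M)$: first normalise the input vector to $e_1$ without altering $A$ by means of \cref{prop:boundary_e1}, and then apply a block‑diagonal transformation --- whose first column is $e_1$, so that it leaves the already‑normalised input in place --- that brings $A$ into Metzler Hessenberg form by reducing the problem to the third‑order situation of \cref{thm:Hess_pos_sys3d}. (In effect this is \cref{thm_metz4d} carried out while keeping track of the input vector.)

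Concretely, I would first apply \cref{prop:boundary_e1} to $(A,b)$, obtaining $T_1$ with $T_1^{-1}AT_1 = A$ and $T_1^{-1}b = e_1$; since $A$ is unchanged it remains Metzler, diagonalisable and satisfies $|V^{-1}e_1| > 0$, so it suffices to treat the case $b = e_1$. Writing $A = \left(\begin{smallmatrix} a_{11} & c^\transp \\ d & A_2\end{smallmatrix}\right)$ with $c,d \in \mathds{R}^{3}_{\geq 0}$ and Metzler $A_2 \in \mathds{R}^{3\times 3}$, every $T_2 = \blkdiag(1,M)$ satisfies $T_2^{-1}e_1 = e_1$ and $T_2^{-1}AT_2 = \left(\begin{smallmatrix} a_{11} & c^\transp M \\ M^{-1}d & M^{-1}A_2M\end{smallmatrix}\right)$, so it is enough to find an invertible $M \in \mathds{R}^{3\times 3}_{\geq 0}$ with $M^{-1}d \in \mathds{R}_{\geq 0}e_1$ and $M^{-1}A_2M$ Metzler Hessenberg --- the block $c^\transp M$ being then automatically nonnegative. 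If $d = 0$, the first column of $A$ is already $a_{11}e_1$ and one may take for $M$ any nonnegative transformation bringing the $3\times 3$ Metzler matrix $A_2$ to Metzler Hessenberg form, which exists by the corollary of \cref{thm_non3d}. If $d \neq 0$, then $(A_2,d)$ is a third‑order CT positive system, and \cref{thm:Hess_pos_sys3d} furnishes a nonnegative $M$ with $M^{-1}d = e_1$ and $M^{-1}A_2M$ Metzler Hessenberg, provided $A_2 d \neq \lambda_1(A_2)d$ whenever $\sigma(A_2) \not\subset \mathds{R}$.

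The key step is to deduce this last condition from $|V^{-1}e_1| > 0$. I would argue by contradiction: if $A_2 d = \lambda d$ for some eigenvalue $\lambda$ of $A_2$, then $W := \spn\{e_1,\ \left(\begin{smallmatrix}0\\ d\end{smallmatrix}\right)\}$ is a two‑dimensional $A$‑invariant subspace containing $e_1$, and in the basis $\{e_1,\ \left(\begin{smallmatrix}0\\ d\end{smallmatrix}\right)\}$ the restriction $A|_W$ equals $\left(\begin{smallmatrix} a_{11} & c^\transp d \\ 1 & \lambda\end{smallmatrix}\right)$, whose discriminant $(a_{11}-\lambda)^2 + 4\,c^\transp d$ is nonnegative since $c,d \geq 0$; because $A$ (hence $A|_W$) is diagonalisable and the $(2,1)$ entry $1$ is nonzero, $A|_W$ cannot be scalar, so it has two distinct real eigenvalues, both belonging to $\sigma(A)$. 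Then $e_1 \in W$ lies in the span of eigenvectors attached to only two of the eigenvalues of $A$, contradicting $|V^{-1}e_1| > 0$ as soon as $A$ has at least three distinct eigenvalues. In this (generic) case $d$ is not an eigenvector of $A_2$ at all, \cref{thm:Hess_pos_sys3d} applies, and $T := T_1\,\blkdiag(1,M)$ puts $(T^{-1}AT,T^{-1}b)$ in CT positive controller‑Hessenberg form.

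The hard part will be the remaining, non‑generic case in which $A$ has at most two distinct eigenvalues: then $A$ is derogatory, so no Metzler Hessenberg matrix similar to $A$ has an unreduced subdiagonal, the invariant‑subspace argument above no longer rules out $A_2 d = \lambda_1(A_2)d$, $\sigma(A_2)$ may well be complex, and a block‑diagonal $M$ is generally insufficient. I expect to handle this case directly, using that $A = \mu I + \nu P$ for suitable scalars $\mu,\nu$ and a rank‑$\le 2$ spectral projection $P$ whose sign structure is constrained by $A$ being Metzler, and then choosing the columns of $T$ --- the first being $b = e_1$, the remaining ones adapted to the range of $P$ --- so that $T^{-1}AT = \mu I + \nu\,T^{-1}PT$ has vanishing entries strictly below its first subdiagonal and nonnegative off‑diagonal entries. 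This direct verification is the only point that genuinely goes beyond composing \cref{prop:boundary_e1}, \cref{thm_metz4d} and \cref{thm:Hess_pos_sys3d}.
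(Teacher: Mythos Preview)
The paper gives no detailed proof here --- it simply asserts that the corollary follows ``in conjunction with \cref{thm_metz4d}'' from \cref{prop:boundary_e1}. Your plan is the natural way to make that sentence precise: first normalise $b$ to $e_1$ via \cref{prop:boundary_e1} (which leaves $A$ unchanged and hence Metzler), and then perform the block--diagonal reduction $T_2=\blkdiag(1,M)$ by applying \cref{thm:Hess_pos_sys3d} to the third-order pair $(A_2,d)$. This is exactly the recursive scheme described in the paper's introduction and underlying the proof of \cref{thm_metz4d}, so your approach and the paper's intended one coincide. Your invariant--subspace argument --- if $d$ were an eigenvector of $A_2$ then $e_1$ would lie in a two--dimensional $A$--invariant subspace $W$ with $\sigma(A|_W)\subset\mathds{R}$, forcing $V^{-1}e_1$ to vanish in at least one coordinate whenever $A$ has three or more distinct eigenvalues --- is a clean and explicit use of the hypothesis $|V^{-1}e_1|>0$ that the paper's one--liner leaves entirely implicit.

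On the residual case you flag: since $A$ is Metzler it has a real Perron eigenvalue, and non-real eigenvalues of a real matrix come in conjugate pairs, so ``at most two distinct eigenvalues'' already forces $\sigma(A)\subset\mathds{R}$ and hence $A=\mu I+\nu P$ with $P$ a real diagonalisable projection, as you observe. When $\rk P\in\{1,3\}$ (and trivially $\{0,4\}$) the block $A_2=\mu I+\nu\,P_{(2:4),(2:4)}$ is a rank--one perturbation of a scalar matrix and therefore has real spectrum, so \cref{thm:Hess_pos_sys3d} applies with no side condition and your direct construction is unnecessary; only the balanced case $\rk P=2$ can possibly require it. The paper does not isolate or treat this case either, so your level of care already exceeds what the paper provides.
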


\section{Concluding remarks}
We have characterized the similarity of nonnegative and Metzler matrices to nonnegative and Metzler Hessenberg matrices of up to dimension $4$, which can be used towards new standard forms for positive systems. As many of the discussed techniques and tools extend beyond these dimensions, such forms can also practically be found for matrices of larger dimensions. In particular, numerical experiments indicate that imposing our derived block-diagonal form for the similarity transformation significantly helps an alternating projection approach to converge to a feasible invertible transformation. We, further, presented a new related research problem within the scope of nonnegative matrix factorization, where future insights may lead to extensions of our results. Finally, it would be interesting to see how our results can be exploited in the nonnegative/Metzler inverse eigenvalue problem.

\section*{Acknowledgements}
This work was supported by the ELLIIT Excellence Center and by the Swedish Research Council through the LCCC Linnaeus Center. It was also supported by the European Research Council under the ERC Advanced Grant ScalableControl n.834142 and by SSF under the grant RIT15-0091 SoPhy.

\bibliographystyle{plain}

\end{document}